\edef\marginnotetextwidth{\the\textwidth}
\renewcommand{\l}{\overset}
\newcommand{\too}[1]{\l{#1}\to}
\newcommand{\hteq}{\simeq}
\renewcommand{\st}{\hspace{2pt} : \hspace{2pt}}
\newcommand{\isom}{\cong}
\newcommand{\til}{\widetilde}
\renewcommand{\Im}{\operatorname{Im}}
\definecolor{mgray}{RGB}{150,150,150}
\newcommand{\an}[1]{\left\langle {#1}\right\rangle}
\newcommand{\bto}{\leftarrow}
\newcommand{\btoo}[1]{\l{#1}\bto}
\newcommand{\djunion}[1]{\underset{#1}\amalg}
\newcommand{\ttoo}[1]{\l{#1}\longrightarrow}
\newcommand{\tensor}{\otimes}
\newcommand{\djunions}{\bigsqcup}
\renewcommand{\bar}[1]{{\overline{#1}}}
\newcommand{\Cyl}{\operatorname{Cyl}}
\newcommand{\M}{\mathcal{M}}
\newcommand{\OO}{\mathcal{O}}
\newcommand{\switchmargin}{
\if@reversemargin
\normalmarginpar
\else
\reversemarginpar
\fi
}
\newcommand{\highlighteva}[1]{\ifmmode{\text{\sethlcolor{LightGray}\hl{$#1$}}}\else{\sethlcolor{PaleTurquoise}\hl{#1}}\fi}
\definecolor{llred}{RGB}{237,228,228}
\definecolor{llgray}{RGB}{230,230,230}
\definecolor{maroon}{RGB}{150,0,0}
\definecolor{orange}{RGB}{255,165,0}
\newcommand{\highlight}[1]{\ifmmode{\text{\sethlcolor{llgray}\hl{$#1$}}}\else{\sethlcolor{llred}\hl{#1}}\fi}
\newcommand{\argforcustom}{}
\theoremstyle{definition}
\newtheorem{helperforcustom}[equation]{\argforcustom}
\newtheorem*{helperforcustomstar}{\argforcustom}
\newenvironment{custom}[1]{\renewcommand{\argforcustom}{#1}\begin{helperforcustom}}{\end{helperforcustom}}
\newcommand{\cyl}{\otimes}
\newcommand{\fra}{\mathfrak{a}}
\newcommand{\frb}{\mathfrak{b}}
\newcommand{\Oiy}{\mathcal{O}_{\infty}}
\newcommand{\Mor}{\operatorname{Mor}}
\newcommand{\Kmod}{\mathcal{K}^{mod}}
\begin{document}
\title{A Toda bracket convergence theorem for multiplicative spectral sequences}
\author[Belmont]{Eva Belmont}
\author[Kong]{Hana Jia Kong}
\date{\today}
\maketitle

\begin{abstract}
Moss' theorem, which relates Massey products in the $E_r$-page of the classical Adams
spectral sequence to Toda brackets of homotopy groups, is one of the main tools
for calculating Adams differentials.
Working in an arbitrary symmetric monoidal stable simplicial model category,
we prove a general version of Moss' theorem which applies
to spectral sequences that arise from filtrations compatible with the monoidal structure. 
This involves the study of Massey products and Toda brackets in a non-strictly associative context.
The theorem has broad applications, e.g. to the computation of the motivic slice
spectral sequence and other colocalization towers.
\end{abstract}

\section{Introduction}
\label{sec:intro}

In homological algebra, Massey products \cite{massey} are operations that encode higher multiplicative structure on a differential graded algebra. Analogously in homotopy theory, Toda brackets (e.g. \cite{kochman-stable-book, Toda62}) encode higher multiplicative structure on homotopy classes of maps, for example on the homotopy groups of a ring spectrum.

In a nice enough spectral sequence computing homotopy groups of a ring spectrum, the multiplicative structure on $E_r$-pages of the spectral sequence agrees with the multiplicative structure on the homotopy groups. 
A theorem by Moss \cite[Theorem 1.2]{moss} states that in the Adams spectral sequence, there is a correspondence of higher multiplicative structures as well: given a Massey product of permanent cycles in the $E_r$-page such that the corresponding Toda bracket is defined, and assuming a ``crossing differentials'' hypothesis in the relevant degrees, the Massey product contains a permanent cycle that converges to an element of the Toda bracket.
This correspondence of Toda brackets and Massey products is used heavily in determining the differentials of Adams spectral sequences; for modern work with classical references, see \cite{stable-stems,IWX}.

We generalize Moss' theorem in two ways. While Moss' proof only applies to the Adams spectral sequence, our theorem holds for a wide range of spectral sequences,
with the motivic effective slice spectral sequence as a motivating example. 
Furthermore, while the Adams spectral sequence is strictly multiplicative, our
theorem applies in many cases where the filtered object associated to the
spectral sequence is only homotopy associative. This adds generality even when
the ring $R$ of interest is not strictly associative, as
strict associativity of $R$ does not
imply strict associativity of a given filtration $R_*$ of $R$.
We defer the discussion of hypotheses on the
spectral sequence to Section \ref{sec:intro-setup} and state the main theorems
next.

Although Theorem \ref{thm:main-intro} discusses Toda brackets
$\an{\alpha'',\alpha',\alpha}$
of classes $\alpha'',\alpha',\alpha \in \pi_*(R)$ for a ring spectrum $R$, the
generality used in the text also includes the case
$\alpha'',\alpha'\in \pi_*(R)$ and $\alpha\in \pi_*(M)$ for an $R$-module $M$.
\begin{thm}[Theorem \ref{thm:main}]
\label{thm:main-intro}
Let $R = R_0 \btoo{i} R_1\btoo{i} R_2 \bto \dots$ be a filtered object with a
multiplication $\mu$ satisfying the setup in Section \ref{sec:intro-setup}, and
consider the spectral sequence $\{ E_r(R) \}_r$ associated to this filtered object.
Fix $r\geq 2$.
Suppose $\fra,\fra',\fra''\in E_r(R)$ are permanent cycles converging to elements
$\alpha,\alpha',\alpha''\in \pi_*(R)$ such that $\fra'\fra$ and $\fra''\fra'$ are 0 in the
$E_r$-page, and $\mu(\alpha',\alpha)$ and $\mu(\alpha'',\alpha')$ are null. Suppose the spectral sequence is weakly
convergent (see Definition \ref{def:weakly-convergent}) and the crossing
differentials hypothesis (see Definition \ref{def:cross}) is satisfied in
degrees $(r-1, |\fra'\fra|)$ and $(r-1, |\fra''\fra'|)$. Then the Massey product
$\an{\fra'',\fra',\fra}\subseteq E_r(R)$ contains a permanent cycle that converges to an element of the Toda bracket $\an{\alpha'',\alpha',\alpha}\subseteq \pi_*(R)$.
\end{thm}

If $r\geq 2$, then $E_r(R)$ is the homology of the differential graded algebra
$(E_{r-1}(R), d_{r-1})$. In the $r=1$ case, we consider Toda brackets on $E_1(R)$ by regarding it as the homotopy groups of the associated graded objects $R_{\star,1}$ of the filtration on $R$.

\begin{thm} [Theorem \ref{thm:E1}] \label{thm:E1-intro}
Assume the setup of Theorem \ref{thm:main-intro} with $r=1$. Then the Toda bracket $\an{\fra'',\fra',\fra}\subseteq \pi_*(R_{*,1}) = E_1(R)$ contains a permanent cycle that converges to an element of the Toda bracket $\an{\alpha'',\alpha',\alpha}\subseteq \pi_*(R)$.
\end{thm}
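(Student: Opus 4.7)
The plan is to parallel the proof of Theorem \ref{thm:main}, exploiting a key simplification at $r=1$: the group $E_1(X(i)) = \pi_*(X(i)_\star/X(i)_{\star+1})$ is literally the homotopy of the associated graded, so the ``Toda bracket on $E_1$'' appearing in the statement is an honest topological Toda bracket in the homotopy category, built from nullhomotopies of products on the filtration quotients, rather than an algebraic Massey product in a DGA. The task thus reduces to matching up Toda brackets on the associated graded and on $X(1)$ itself.

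First, I would choose lifts $\tilde\alpha_i: S^{n_i} \to X(i)_{s_i}$ of representatives of $a_i$; these exist because $a_i$ is a permanent cycle, and post-composing with $X(i)_{s_i}\to X(i)$ yields representatives of $\alpha_i$. The next step is to produce compatible nullhomotopies for the products $\mu_1 \circ (\tilde\alpha_2\wedge\tilde\alpha_1)$ and $\mu_2 \circ (\tilde\alpha_3\wedge\tilde\alpha_2)$. The hypothesis $a_1 a_2 = 0$ in $E_1$ gives a nullhomotopy of the composite into $X(1)_{s_1+s_2}/X(1)_{s_1+s_2+1}$; equivalently, via the fiber sequence
\[ X(1)_{s_1+s_2+1} \longrightarrow X(1)_{s_1+s_2} \longrightarrow X(1)_{s_1+s_2}/X(1)_{s_1+s_2+1}, \]
it provides a lift of $\mu_1 \circ (\tilde\alpha_2\wedge\tilde\alpha_1)$ through $X(1)_{s_1+s_2+1}$. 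The hypothesis $\alpha_1\alpha_2 = 0$ in $\pi_*(X(1))$ provides a nullhomotopy of the further composite into $X(1)$. The analogous data is available for $\tilde\alpha_3 \wedge \tilde\alpha_2$.

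Given a pair of compatible nullhomotopies — one on the associated graded level and one on the full space, chosen so that the former is the image of the latter under the tower maps — the three-fold Toda bracket construction produces in parallel an element $\beta \in \an{\alpha_1,\alpha_2,\alpha_3} \subseteq \pi_*(X(1))$ together with its detection $b \in \an{a_1,a_2,a_3} \subseteq E_1(X(1))$. By construction $b$ is a permanent cycle converging to $\beta$, which is exactly what the theorem asserts.

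The main obstacle is the compatibility step: arbitrary choices of the two kinds of nullhomotopies need not be related, and one must invoke the crossing differential hypothesis (applied at the $E_0$-level in the degrees of $a_1a_2$ and $a_2a_3$) to rule out obstructions coming from classes of higher filtration in the relevant degrees. I expect this bookkeeping to be technically lighter than the analogous step for $r \geq 2$, since the nullhomotopies in question are genuine topological homotopies rather than chain-level data, and most of the diagrammatic argument from the proof of Theorem \ref{thm:main} should import with only minor modifications.
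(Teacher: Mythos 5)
Your overall strategy---run the Toda bracket construction simultaneously on the associated graded and on $X(1)$ itself---is the same one the paper uses, where the argument is factored through an intermediate bracket $\an{\alpha,\alpha',\alpha''}_*$ formed at the level of the filtered pieces and then projected in both directions (Propositions \ref{prop:E1} and \ref{prop:thm-part2-E1}). However, your compatibility mechanism, as stated, cannot be implemented: you ask for the associated-graded nullhomotopy to be ``the image of'' the nullhomotopy on the full space ``under the tower maps,'' and no such map exists---the tower maps run $X(1)_{s}\to X(1)$, and the associated graded is a quotient of $X(1)_{s}$, so nothing maps from $X(1)$ to $X(1)_{s}/X(1)_{s+1}$. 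The only way to make the two nullhomotopies compatible is to produce a single nullhomotopy at the intermediate filtered stage, i.e.\ a nullhomotopy of the product $S\to X(1)_{s_1+s_2}$ (and likewise for $X(2)_{s_2+s_3}$), and push it both up to $X(1)$ and down to the quotient.

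For that you need the filtered product to actually vanish in $\pi_*(X(1)_{s_1+s_2})$, and this is the one place the crossing differential hypothesis enters. A priori you only know that the product dies in the colimit ($\alpha_1\alpha_2=0$) and in the associated graded ($a_1a_2=0$, equivalently your lift through $X(1)_{s_1+s_2+1}$); the paper's Lemma \ref{lem:crossing-diff-E1} shows that, under the $E_0$-page crossing differential hypothesis, these two vanishings force the filtered class itself to be zero, since a nonzero such class would be detected by a differential of length $\geq 2$ crossing the relevant degree. You allude to ``ruling out obstructions from higher filtration,'' which is the right instinct, but the proposal never states that the filtered products themselves must be shown null, and without that step the intermediate nullhomotopies---and hence the parallel construction---do not exist. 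Once this is supplied, the rest of your sketch goes through as in the paper: the resulting class in the $E_1$-page bracket lifts to $\pi_*(X(1)_{s_1+s_2+s_3})$ by construction, hence is a permanent cycle, and its image in $\pi_*(X(1))$ lies in $\an{\alpha_1,\alpha_2,\alpha_3}$.
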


These theorems hold for the motivic effective slice spectral sequence, as well as spectral sequences arising from more general colocalization towers (see Section \ref{sec:colocalization}).
In Section \ref{sec:R-motivic} we give examples of $\R$-motivic slice spectral
sequence computations using both theorems. 
Our theorems also hold for the Adams spectral sequence constructed in a general context (see Example \ref{example:Adams}).

\subsection{Toda brackets in a general setting}
This work is about multiplicative Toda brackets associated to a triple
$(X'',X',X)$ where $X'$ is an $X''$-algebra and $X$ is an $X'$-module. Closely related is the
\emph{composition Toda bracket}, associated to a diagram $Y\to
X''\to X'\to X$ where consecutive compositions are nullhomotopic.
The construction of both types of brackets in the stable homotopy category is classical; see \cite{kochman-uniqueness}
for references and a comparison.
The literature is less straightforward about the existence of Toda brackets in a symmetric monoidal stable model category, though such a construction is commonly assumed in practice.
Christensen and Frankland \cite{christensen-frankland} give several general
constructions of composition Toda brackets, along with a comparison.


Our version of the definition of a multiplicative Toda bracket (Definition \ref{def:toda}) has an extra complication because we do not require strict associativity of the system $(X'',X',X)$. In addition to providing extra generality, this is also natural to consider: even in the case that $(X'',X',X)$ is strictly associative, the filtered system $(X''_*,X'_*,X_*)$ may not be strictly associative.
Instead, in our motivating application (the motivic slice spectral sequence), the framework given by \cite{GRSO}
implies a structure on the filtered object that is more like an
$A_\infty$ structure; see Section \ref{sec:colocalization} for details.
Since the proof of the main theorem uses Toda bracket-like constructions on the filtered object (see \S\ref{sec:comparison-Er}), we consider Toda
brackets in a homotopy associative setting. No real complexity is then added in
passing to the setting where the original objects $(X'',X',X)$ are also assumed to
be only homotopy associative. Instead, we find this clarifies the setup as well
as being of independent interest.



\subsection{Summary of setup and assumptions} \label{sec:intro-setup}
For simplicity we state the case where $X''=X'=X$ is a ring object $R$; see
Section \ref{sec:setup} for the most general setup we consider, covering ring actions on modules as well. The assumptions below were designed to fit the known structure
of the motivic slice spectral sequence as described in \cite{GRSO},
specifically the precise associativity properties of the filtered object $R_*$.
Since \cite{GRSO} studies general colocalization towers,
this is also a useful framework for other spectral sequences arising from such towers.

Let $(\sC, \tensor, S)$ be a symmetric monoidal stable simplicial model category with cofibrant unit $S$ and consider an $A_3$-ring $R$; this is a homotopy associative ring object $(R,\mu)$ with a choice of homotopy $\mu(\mu(-,-),-)\hteq \mu(-,\mu(-,-))$.
Suppose there is a filtration $R=R_0 \btoo{i} R_1\btoo{i} R_2\btoo{i} \dots$ and let $R_{s,r} = \hocolim(R_{s+r}\to R_s)$. Consider
the spectral sequence 
$$E_1(R)=\pi_*(R_{f,1})\implies \pi_*(R)$$
where $\pi_*(-)$ denotes $\pi_*(\Map_\sC(S,-))$.
Assume that there are pairings $\mu_{s,t}:R_s\tensor R_t \to R_{s+t}$ and $\mu_{s,t,1}:R_{s,1}\tensor R_{t,1}\to R_{s+t,1}$ for $s,t\geq 0$. We make the following assumptions:
\begin{enumerate} 
\item The pairings make $\iota: R_*\to R$ a map of $A_3$-rings, and the natural map $p:R_*\to R_{*,1}$ commutes with the pairings. That is, there are strictly commutative diagrams
$$ \xymatrix{
R_s\tensor R_t\ar[r]^-{\mu_{s,t}}\ar[d]_\iota & R_{s+t}\ar[d]^\iota
\\R\tensor R\ar[r]^-\mu & R
}\hspace{30pt}
\xymatrix{
R_s\tensor R_t\ar[r]^-{\mu_{s,t}}\ar[d]_p & R_{s+t}\ar[d]^p
\\R_{s,1}\tensor R_{t,1}\ar[r]^-{\mu_{s,t,1}}\ar[r] & R_{s+t,1}
} $$
and the associativity homotopies for $R_*$ and $R$ are strictly compatible.
\item A condition that is less strict than asking
$$ \xymatrix{
R_s\tensor R_t\ar[r]^-{\mu_{s,t}}\ar[d]_i & R_{s+t}\ar[d]^i
\\R_s\tensor R_{t-1}\ar[r]^-{\mu_{s,t-1}} & R_{t-1}
}\hspace{30pt}
\xymatrix{
R_s\tensor R_t\ar[r]^-{\mu_{s,t}}\ar[d]_i & R_{s+t}\ar[d]^i
\\R_{s-1}\tensor R_t\ar[r]^-{\mu_{s-1,t}} & R_{t-1}
} $$
to commute, but stricter than asking these diagrams to commute up to homotopy; see Assumption \ref{ass:X}\eqref{item:mu_r},\eqref{item:H_1}.
\item The pairing $\mu_{*,1}$ induces a pairing of spectral sequence $E_r$-pages.
\end{enumerate}
Some other results on the multiplicativity of spectral sequences (e.g.
\cite{davis-snaith,hedenlund-rognes}) are formulated in terms of
Cartan-Eilenberg systems, which makes assumptions only on the system of
quotients $\{R_{s,r}\}$. Our setting also includes assumptions on the
multiplicativity of the filtration $R_*$ itself. Our setting is more general than the setting of monoids in
filtered spectra; see Remark \ref{rmk:filtered}.

The idea behind the proof of Moss' theorem (Theorems \ref{thm:main-intro},
\ref{thm:E1-intro}) is to lift $\fra,\fra',\fra''\in E_r(R)$ to $\alpha_*,\alpha'_*,\alpha''_*$ in $\pi_*(R_*)$ and show that there is a comparison of brackets
$$ \an{\fra'',\fra',\fra}\btoo{p} \an{\alpha''_*,i^r\alpha'_*, \alpha_*}\too{\iota} \an{\alpha'',\alpha',\alpha}, $$
using the crossing differentials hypothesis (see Definition \ref{def:cross}, Assumption \ref{ass:crossing}) to ensure that the middle term is defined.
In practice, the crossing differentials hypothesis is satisfied in many cases.

This strategy essentially works for Theorem \ref{thm:E1-intro}, but for Theorem \ref{thm:main-intro}, 
we need to replace the element of $\an{\alpha''_*,i^r\alpha'_*,\alpha_*}$ with a more complicated element, corresponding to the extra technical conditions in Assumption \ref{ass:X}\eqref{item:mu_r},\eqref{item:H_1}.

\subsection{Outline} In Section \ref{sec:sseq-setup}, we introduce notation for the spectral sequence associated to a tower, including a review of the crossing differentials criterion. In Section \ref{sec:toda} we give notation for the setup of the main theorem, namely a pairing of spectral sequences satisfying certain properties along with certain elements that will form the relevant Toda brackets and Massey products. We define 3-fold Toda brackets and Massey products in our general setting. In Section \ref{sec:comparison} we prove our versions of Moss' theorem (Theorems \ref{thm:main} and \ref{thm:E1}). In Section \ref{sec:examples} we show that the assumptions in our setup apply to colocalization towers, and
give some example applications of these theorems to specific computations in the $\R$-motivic slice spectral sequence.

The reader who wishes to jump to the precise statements of the theorems should 
read Assumption \ref{ass:X}, the notation at the beginning of Section \ref{sec:assumptions2}, and Assumption \ref{ass:crossing}, and then skip to Section \ref{sec:comparison}.

\subsection{Acknowledgements}
We would like to thank Dan Isaksen for helpful conversations and comments, and Peter May for helpful comments on our manuscript.
We would also like to thank John Rognes for finding an error in an earlier
version of the manuscript.
The first author was supported by the National Science Foundation under Grant No. DMS-2204357.
The second author was supported by the National Science Foundation under Grant No. DMS-1926686.

\section{The spectral sequence associated to a tower}\label{sec:sseq-setup}
\subsection{The spectral sequence for a tower of homotopy cofiber sequences}
We work in a stable symmetric monoidal simplicial model category
$(\sC,\tensor,S)$ with cofibrant unit.
Suppose we have a tower
\begin{equation}\label{eq:tower} \xymatrix{
X=X_0\ar[d]_-p & X_1\ar[l]_-i & \dots\ar[l] & X_f\ar[l]_-i\ar[d]_-p &
X_{f+1}\ar[l]_-i & \dots\ar[l]_-i
\\X_{0,1}\ar@{.>}[ru]_-\kappa &&& X_{f,1}{}\ar@{.>}[ru]_-\kappa & 
}\end{equation}
in $\sC$, where $X_{f,r}$ for $r\geq 1$ is defined to be the homotopy cofiber
$$ X_{f,r} := X_f\djunion{X_{f+r}} CX_{X_{f+r}}. $$
We abuse notation so $i$ refers to any of the maps $X_{f+1}\to X_f$ or any of their iterates.

Applying $\pi_*(-)$, we have a spectral sequence
$$ E_1^{n,f}(X)=\pi_n(X_f/X_{f+1})\Rightarrow \pi_n(X). $$
We will use the bigrading convention $(n,f)$ where $n$ is the stem and $f$ is
the filtration.
It will be convenient for us to have a way to view cycles and
boundaries as subsets of $E_1$:
for $r\geq 2$, let
\begin{align*}
Z_r^{*,f} & = \{ a\in \pi_*(X_{f,1}) \st \kappa(a)\in \pi_{*-1}(X_{f+1}) \text{ lifts to }\pi_{*-1}(X_{f+r}) \} \text{, and}
\\B_r^{*,f} & =\begin{cases}
 	\{ a\in \pi_*(X_{f,1})\st a\text{ lifts to some }\alpha\in
\ker(\pi_*(X_f)\to \pi_*(X_{f-r+1})) \} &  f\geq r- 1,\\
0  & f<r-1.
 \end{cases}
\end{align*}
Say that an element of $Z_r^{*,*}$ is a \emph{$d_{r-1}$-cycle}, and say that an
element of $B_r^{*,*}$ is a \emph{$d_{r-1}$-boundary}. Note that there is an inclusion $B_r^{*,f}\subseteq Z_r^{*,f}$ since $\kappa(a)=0$ for $a\in B_r^{*,f}$.
Let
$$ E_r^{*,*} = Z_r^{*,*}/B_r^{*,*}. $$
Given $x\in Z_r^{*,*}$, let $[x]_r$ denote its image in $E_r^{*,*}$.
Then there is a map $d_r:E_r^{n,f}\to
E_r^{n-1,f+r}$ defined as follows: given $a\in Z_r^{n,f}$, let $x\in
\pi_{n-1}(X_{f+r})$ be a lift of $\kappa(a)$, and define $d_r(a)=px$. (To see
that this is well-defined, observe that the difference of two lifts is in
$\ker(\pi_*(X_{f+r})\to \pi_*(X_{f+1}))$.)

\begin{rmk}\label{rmk:EZR}
This is not the usual definition of cycles and boundaries, but we claim that the
$E_r$-page as defined above is isomorphic to the usual $E_r$-page, defined as a
subquotient of $E_1$.
A cycle in the usual sense is an equivalence class of
elements $a\in E_1(X)$ in the kernel of $d_i$ for all $i\leq r-1$, modulo
$d_i$-boundaries for $i< r-1$. Our $Z_r^{*,*}$ consists of all such elements $a$,
since $\kappa(a)$ lifts to $\pi_{*-1}(X_{f+r})$ if and only if its image in
$\pi_{*-1}(X_{f+r-1,1})$ is zero. Our $B_r^{*,*}$ consists of all
$d_i$-boundaries for $i\leq r-1$:
a differential $d_i(b)=a$ means that there
is a class $\til{a}\in \pi_*(X_f)$ that projects to $a\in \pi_*(X_{f,1})$ and whose image in
$\pi_*(X_{f-i+1})$ equals $\kappa(b)$; hence the image of $\til{a}$ in
$\pi_*(X_{f-i})$ is zero.
\end{rmk}

\begin{definition}[see {\cite[Definition 3.8]{mccleary}}]\label{def:weakly-convergent}
Let
$$ R_f = \bigcap_{r\geq 0} \Im(\pi_*(X_{f+r})\to \pi_*(X_f)).  $$
Say that a spectral sequence $\{ E_*(X) \}$ is \emph{weakly convergent} if the
natural map $R_{f+1}\to R_f$ is an injection for all $f$.
\end{definition}

\subsection{The crossing differentials condition}\label{sec:crossing-diff}
If $a\in E_r^{n,f}$ is a $d_r$-boundary, then by definition it has a lift to $\pi_*(X_f)$
(i.e., $\alpha\in \pi_n(X_f)$ with $p\alpha = a$) that is in $\ker(\pi_*(X_f)\to
\pi_*(X_{f-r}))$. Proposition \ref{prop:crossing-diff} says that \emph{every}
lift is in the kernel, as long as we assume a criterion called the crossing differentials hypothesis.

\begin{definition}
\label{def:cross}
Say that a spectral sequence $\{E_*(X)\}$ \emph{satisfies the crossing
differentials hypothesis for $(r,n,f)$} if
\begin{equation} \label{eq:crossing} \text{every element in }E^{n+1,m}_{f-m+1}(X)\text{ is a permanent
cycle for } 0\leq m\leq f-r-1. \end{equation}
If an element $y$ is in stem $n$ and filtration $f$, write $(r,|y|):= (r,n,f)$.
\end{definition}
In the context of a differential $d_r(x)=y$ for $x\in E_r^{n+1,f-r}(X)$,
$y\in E_r^{n,f}(X)$, the crossing differentials hypothesis is satisfied in degree
$(r,|y|)$ if there are no differentials
from stem $n+1$ to stem $n$ with source filtration $<f-r$ and target filtration $>f$.
$$
\begin{tikzpicture}
\filldraw[black] (-1,1) circle (1pt) [anchor=east] node {$y$};
\draw[->] (0,0) -- (-1,1);
\draw[->,dashed] (0,-0.5) -- (-1,1.5);
\filldraw[black] (0,0) circle (1pt)[anchor=west] node {$x$};
\filldraw[black] (-0.8,1.3) circle (0pt)[anchor=west] node {\tiny a crossing differential};
\end{tikzpicture}
$$

\begin{prop}[{\cite[Proposition 6.3]{moss}}]\label{prop:crossing-diff}
Assume that $\{ E_*(X) \}$ is weakly convergent. 
Suppose $a\in \pi_*(X_{f,1})$ is a $d_r$-boundary, and suppose
$\alpha\in \pi_*(X_f)$ is any lift of $a$ such that $\alpha$ is in $\ker(\pi_*(X_f)\to \pi_*(X))$.
If the crossing differentials hypothesis is satisfied in degree $(r,|a|)$, then
$\alpha$ is in $\ker(\pi_*(X_f)\to \pi_*(X_{f-r}))$; i.e., the composite
$S\xrightarrow{\alpha} X_f\xrightarrow{i} X_{f-r}$ is nullhomotopic.
\end{prop}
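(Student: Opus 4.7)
My plan is to reduce the proposition to a question about one auxiliary class $\gamma\in\pi_*(X_{f+1})$, then apply the crossing differential hypothesis to derive a contradiction.

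For the reduction: since $a\in B_r^{*,f}$, the definition produces a second lift $\alpha'\in\pi_*(X_f)$ of $a$ whose image already vanishes in $\pi_*(X_{f-r+1})$, hence also in $\pi_*(X_{f-r})$ and in $\pi_*(X)$. Because $\alpha - \alpha'$ lies in the kernel of $\pi_*(X_f)\to\pi_*(X_{f,1})$, the cofiber sequence $X_{f+1}\to X_f\to X_{f,1}$ yields $\gamma\in\pi_*(X_{f+1})$ with $i(\gamma) = \alpha - \alpha'$. Both $\alpha$ and $\alpha'$ die in $\pi_*(X)$, so $\gamma$ does too. Since $\alpha = \alpha' + i(\gamma)$ and $\alpha'$ already vanishes in $\pi_*(X_{f-r})$, it suffices to show that $\gamma$ vanishes in $\pi_n(X_{f-r})$.

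Arguing by contradiction, suppose $\gamma_{f-r}\neq 0$, where $\gamma_m\in\pi_n(X_m)$ denotes the image of $\gamma$ under $X_{f+1}\to X_m$. Let $m^*$ be the smallest index with $\gamma_{m^*}\neq 0$; then $1\leq m^*\leq f-r$. Since $\gamma_{m^*}$ vanishes in $X_{m^*-1}$, the cofiber sequence $X_{m^*}\to X_{m^*-1}\to X_{m^*-1,1}$ produces $c\in\pi_{n+1}(X_{m^*-1,1})$ with $\kappa(c)=\gamma_{m^*}$. Because $\kappa(c)$ lifts all the way to $\gamma\in\pi_n(X_{f+1})$, Lemma \ref{lem:sseq-easy} shows $c\in Z_{f-m^*+2}^{n+1,m^*-1}$, so $[c]$ defines a class in $E_{f-m^*+2}^{n+1,m^*-1}$. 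Setting $m=m^*-1$, the range $1\leq m^*\leq f-r$ is exactly the range $0\leq m\leq f-r-1$ appearing in the crossing differential hypothesis, which therefore forces $[c]$ to be a permanent cycle.

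The final step extracts a contradiction from the observation that any permanent cycle $[c]$ at the $E_{f-m^*+2}$-page must satisfy $\kappa(c)=0$: by definition some representative $\bar c$ of $[c]$ lifts to $\hat c\in\pi_{n+1}(X_{m^*-1})$, and since the composition $X_{m^*-1}\to X_{m^*-1,1}\xrightarrow{\kappa}\Sigma X_{m^*}$ is zero, we have $\kappa(\bar c)=0$; similarly every element of $B_{f-m^*+2}^{n+1,m^*-1}$ lifts to $\pi_{n+1}(X_{m^*-1})$ and so is killed by $\kappa$, giving $\kappa(c-\bar c)=0$ and hence $\kappa(c)=0$. This contradicts $\kappa(c)=\gamma_{m^*}\neq 0$. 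The main subtlety I anticipate is checking this $\kappa$-vanishing carefully—specifically, interpreting ``permanent cycle'' on the $E_r$-page so that it yields a representative in $\pi_{n+1}(X_{m^*-1,1})$ lifting to $\pi_{n+1}(X_{m^*-1})$, and verifying that $B_{f-m^*+2}^{n+1,m^*-1}$ lies entirely in the kernel of $\kappa$.
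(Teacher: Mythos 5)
The paper gives no proof of this proposition itself --- it is quoted from Moss --- so there is no internal argument to compare against; your proof is a correct reconstruction of Moss's original one, and every step checks out against the paper's definitions: the reduction to the auxiliary class $\gamma$, the localization at the first filtration $m^*$ where $\gamma$ becomes nonzero, and the index bookkeeping (setting $m=m^*-1$ lands exactly on the page $E^{n+1,m^*-1}_{f-m^*+2}$ and the range $0\leq m\leq f-r-1$ of Definition \ref{def:cross}). Two small remarks. First, in the paper's conventions an element of $B_r^{*,f}$ is a $d_{r-1}$-boundary, so ``$a$ is a $d_r$-boundary'' should read $a\in B_{r+1}^{*,f}$, producing a special lift $\alpha'$ killed in $\pi_*(X_{f-r})$ rather than in $\pi_*(X_{f-r+1})$; this changes nothing downstream, since your argument only uses that $\alpha'$ dies in $\pi_*(X_{f-r})$ and in $\pi_*(X)$. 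Second, the subtlety you flag about ``permanent cycle'' resolves in your favor: the paper's stated definition is precisely that some representative lifts to $\pi_{n+1}(X_{m^*-1})$, which is exactly what your final step needs to conclude $\kappa(c)=0$ (under the weaker reading ``survives to $E_\iy$'' one would additionally need a convergence hypothesis on the tower, but that would be a defect of the statement, not of your proof).
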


\begin{proof}
	Let $b\in\pi_{*+1}(X_{f-r,1})$ be the source of the differential that hits $a$. Thus we have a lift $\beta\in \pi_*(X_f)$ of $\kappa(b)$.
	If $\beta = \alpha$, the result follows from the long exact sequence. Otherwise, denote the nonzero element $\alpha-\beta \in \pi_*(X_f)$ by $\gamma$.
	Since $\alpha\in \ker(\pi_*(X_f)\to \pi_*(X))$, there is a minimal $k$ for which $\alpha\in\ker(\pi_*(X_f)\to \pi_*(X_{f-k}))$.
	Assume for the sake of contradiction that $k>r$. Since $\beta\in \ker(\pi_*(X_f)\to \pi_*(X_{f-r}))$ we have $\gamma\in \ker(\pi_*(X_f)\to \pi_*(X_{f-k}))$ and $\gamma\notin \ker(\pi_*(X_f)\too{i} \pi_*(X_{f-k+1}))$. So there exists $c\in \pi_*(X_{f-k,1})$ such that $\kappa(c) = i\circ \gamma$.

	By the weak convergence criterion (Definition \ref{def:weakly-convergent}), there is some $n$ such that $\gamma$ lifts to $\gamma'\in\pi_*(X_{f+n})$ but not to $\pi_*(X_{f+n+1})$.
	Since $\gamma = \alpha-\beta\in \ker(\pi_*(X_f)\to \pi_*(X_{f,1}))$, we have $n\geq 1$. Then $0\neq p\gamma'\in \pi_*(X_{f+n,1})$, and
	we conclude that $d_{k+n}(c)=p\gamma'$. This contradicts the crossing differentials hypothesis, concluding the proof, as long as the target is nonzero.

	If $p\gamma'$ were a $d_i$-boundary for $i\leq k+n$, then it would have a lift $\gamma''\in \ker(\pi_*(X_{f+n})\to \pi_*(X_{f-k}))$. Then $\gamma''-\gamma'$ lifts to $\delta \in \pi_*(X_{f+n+1})$ whose image in $\pi_*(X_{f-k})$ equals $\gamma$, contradicting the definition of $n$.
	\end{proof}

Proposition \ref{prop:crossing-diff} is used in Lemma
\ref{lem:compositions-null}, where we have a specified lift (given as a product) to $\pi_*(X_f)$ of a certain class in $\pi_*(X_{f,1})$; Proposition \ref{prop:crossing-diff} guarantees that the chosen lift becomes zero in $\pi_*(X_{f-r})$, whereas a priori we only know that there exists a lift with this property.

\subsection{Gluing nullhomotopies}
We assume functorial cones, i.e. if $f:X\to Y$ is a map then there is a natural
map $f:CX\to CY$. We assume that $S$ is cofibrant, which implies a familiar
model for $CS$, but not any particular model for $CX$ for general $X$.

\begin{custom}{Notation}
Since $\sC$ is a simplicial model category, a model for $CS$ is $(S \x
\Delta^1)/(S \x \{ 0 \})$, which we write as $(S \x I)/(S \x \{ 0 \})$ (where
$I$ represents the unit interval).
Given a homotopy $F:S\x I \to X$, let $\bar{F}$ denote the homotopy with
reversed orientation $\bar{F}(x,t)=F(x,1-t)$.
We use the convention that $\Sigma X$ denotes the Kan suspension $CX/X$. Let
$$ S^\diamond = (S \x I)/(S \x \{ 0 \}) \djunion{S} (S \x I)/(S \x \{ 1 \}). $$
This is the mapping cone of the inclusion $S\to CS$, and $S^\diamond \hteq
\Sigma S$.
\end{custom}

In a model category, a homotopy between $f,g:X\to Y$ is a map $\Cyl(X)\to Y$
satisfying an appropriate diagram, and if $X$ is cofibrant in a simplicial model
category we may take $X\tensor I$ as the cylinder object $\Cyl(X)$ (see e.g.
\cite[Lemma II.3.5]{goerss-jardine}). We will refer to a map $X\tensor I \to Y$
as a homotopy, which is justified by the fact that we are ultimately concerned
with the composition
$$ S\tensor I \to X\tensor I \to Y $$
which is a homotopy, due the assumption that $S$ is cofibrant.

\begin{definition} \label{def:d()}
Suppose we have maps $F_0,F_n: CS\to X$ and $F_i : S \x I\to X$ for $1\leq i \leq n-1$ such that
$$ F_i|_{S \x \{ 0 \}} = F_{i-1}|_{S \x \{ 1 \}}$$
for $1\leq i \leq n-1$ and $F_n|_{S\x \{ 1 \}} = F_{n-1}|_{S\x \{ 1 \}}$. Define $d(F_0, \dots, F_n):\Sigma S \to X$ to be
$$ \Sigma S \isom (S \x I)/(S \x \{ 0 \}) \djunions_{S}
(S \x I)\djunions_{S} \dots \djunions_{S} (S \x I)/(S \x \{ 1
\})\ttoo{(F_0,\dots,\bar{F_n})} X.  $$
\end{definition}

We present the following figure to illustrate the definition.

\begin{figure}[H]
\includegraphics[width=200pt]{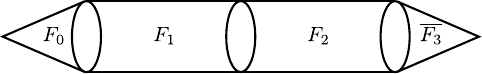}
\end{figure}

\begin{example}
Given nullhomotopies $F,G:CS\to X$, the induced map out of the pushout
$S^\diamond$ in the following diagram is $d(F,G)$.
$$ \xymatrix{
S\ar[r]\ar[d] & CS\ar[d]\ar@/^1pc/[rd]^-F
\\CS\ar[r]\ar@/_1pc/_-G[rr] & S^\diamond\ar@{.>}[r] & X
}$$
\end{example}

\begin{lemma}\label{lem:difference}
Given the setup of Definition \ref{def:d()},
$$ d(F_0,\dots,F_n) = -d(F_n,\bar{F_{n-1}},\dots,\bar{F_1}, F_0). $$
\end{lemma}
The reason that the endpoints do not need their orientation reversed is the
convention that $d(F_0,\dots,F_n)$ refers to the map with components
$(F_0,\dots,\bar{F_n})$.
\begin{proof}
To simplify the notation, we will show that
$S\x [0,2]\too{(F,\bar{F})} X$ is nullhomotopic
for any $F:CS\x I \to X$. The full statement is proved the same way.
Define $H:S\x [0,2]\x [0,1]\to X$ by
$$ H(x,s,t) = \begin{cases} F(x,0) & \text{ if } s \leq t
\\F(x,s-t) & \text{ if } t < s \leq 1
\\F(x,2-s-t) & \text{ if } 1 < s < 2-t
\\F(x,0) & \text{ if } s \geq 2-t.
\end{cases} $$
This is a homotopy from $(F,\bar{F})$ to the constant map at $F(x,0)$.
\end{proof}

\begin{lemma} \label{lem:colimit}
Given $F_0,\dots,F_n$ as in Definition \ref{def:d()} and $g:X\to Y$, we have
$$ g\circ d(F_0,\dots,F_n) = d(g\circ F_0,\dots,g\circ F_n). $$
\end{lemma}
\begin{proof}
This is by definition of the $d(-)$ construction.
\end{proof}

Using this notation we obtain an expression for the source of a $d_r$ differential
with given target.
\begin{lemma} \label{lem:source}
Suppose $\fra\in \pi_*(X_{f,1})$ is a $d_r$-boundary and $\alpha\in
\pi_*(X_f)$ is a lift in $\ker(\pi_*(X_f)\to \pi_*(X_{f-r}))$.
Let $F:CS\to X_{f-r}$ be any nullhomotopy making the diagram commute
\begin{equation}\label{eq:d_r(beta)} \xymatrix{
S\ar[r]^-\alpha\ar[d] & X_f\ar[r]^-i\ar[d] & X_{f-r+1}\ar[d]\ar@/^1.5pc/[dd]^j
\\CS\ar[r]^-F\ar[d] & X_{f-r}\ar@{=}[r] & X_{f-r}\ar[d]_p
\\S^\diamond \ar@{.>}[rr]^-\beta &  & X_{f-r,1}
}\end{equation}
and let $\beta = d(pF, jC\alpha)$ where $j:CX_f \to CX_{f-r+1}\to X_{f-r,1} = X_{f-r}\djunion{X_{f-r+1}}CX_{f-r+1}$ is the natural map. Then
$$ d_r([\beta]_r) = [\fra]_r. $$
\end{lemma}
Recall that we abbreviate iterates of $i$ by just $i$.
\begin{proof}
From the diagram
$$ \xymatrix{
X_{f-r}\ar[d] & X_{f-r+1}\ar[l] & \ar[l]_-i\dots & X_f\ar[d]\ar[l]_-i
\\X_{f-r,1}\ar@{.>}[ru]_-{\kappa} & & & X_{f,1}
}$$
we see that we need to check that $\kappa\circ \beta = i\circ \alpha$ as elements of
$\pi_*(X_{f-r+1})$.
By construction, $\beta$ is the induced map from the homotopy cofiber of $S\to CS$ to the homotopy cofiber of $X_{f-r+1}\to X_{f-r}$.
So the two vertical cofiber sequences in \eqref{eq:d_r(beta)} can be extended as follows.
$$ \xymatrix{
S^\diamond \ar@{.>}[rr]^-\beta \ar[d]_-\hteq &  &
X_{f-r,1} = X_{f-r}\djunion{X_{f-r+1}} CX_{f-r+1}\hspace{-100pt}
\ar[d]^-{\kappa}
\\\Sigma S\ar[rr]^-{i\circ \alpha} &  & \Sigma X_{f-r+1}
}$$
\end{proof}

\section{Toda bracket and Massey product setup}
\label{sec:toda}
Let $(\sC, \tensor, S)$ be a symmetric monoidal stable simplicial model category
with cofibrant unit.
\subsection{Pairings}
Our eventual goal is to discuss higher multiplicative structures; we start by giving notation for the ordinary multiplication of homotopy classes of maps.

\begin{definition}\label{def:mult}
Given a pairing $\mu:X'\tensor X \to X$ and classes $\alpha:\Sigma^nS \to X$ and $\alpha':\Sigma^mS\to X'$, the product $\mu(\alpha',\alpha)$ is defined as the composition $\Sigma^{m+n}S\isom \Sigma^{m}S\tensor \Sigma^{n}S\too{\alpha'\tensor \alpha} X'\tensor X \too{\mu}X$, where the first isomorphism is the unit isomorphism from the symmetric monoidal structure, which commutes with suspension.
\end{definition}
\begin{rmk}\label{rmk:signs}
In the rest of the paper, we will make the simplifying assumption that all maps
have degree zero, i.e. have source $S$. The only change required for nonzero degrees concerns the
insertion of minus signs. These are the same as in the classical case, which is
well known; see for example \cite[\S1.4]{green} for the signs in the definition
of Massey products on a differential graded algebra.
\end{rmk}

\begin{rmk}\label{rmk:X'-to-X-mult}
Given a pairing $\mu:X'\tensor X\to X$ and classes $\alpha:S\to X$ and $\alpha':S\to X'$, the pairing $S\too{\alpha'\tensor \alpha} X'\tensor X \too{\mu} X$ can 
be factored using the unit axiom in two different ways:
\begin{align*}
S\too{\alpha'} \underbracket[1pt]{X'\tensor S \to X' \to X'\tensor S}_{\Id} \too{\alpha} X'\tensor X \too{\mu} X
\\S\too{\alpha} \underbracket[1pt]{S\tensor X \to X \to S\tensor X}_{\Id}\too{\alpha'} X'\tensor X \too{\mu} X
\end{align*}
and so we can regard $\mu(\alpha',\alpha)$ as a map $S\too{\alpha'}
X'\too{\alpha} X$, or equivalently $S\too{\alpha}X\too{\alpha'}X$. (This is not a commutativity statement.)

If $F:CS\to X'$ and $G: CS\to X$ are nullhomotopies, we define $\mu(F,\alpha)$ and $\mu(\alpha',G)$ as the compositions
\begin{align*}
\mu(F,\alpha): CS\too{F} X' \too{\alpha} X
\\\mu(\alpha',G): CS \too{G} X \too{\alpha'} X.
\end{align*}
\end{rmk}

Given a pairing $E_r(X')\tensor E_r(X)\to E_r(X)$, we define the product of classes $\fra'\in E_r(X')$ and $\fra\in E_r(X)$ to be the image along the pairing.

\subsection{$A_3$-structures on a triple}
The motivation is to study ring objects in $\sC$ with homotopy associative
filtrations. To package
the necessary compatibilities and introduce the generality of module actions in
addition to ring multiplications, we use the language of operads.
More precisely, let $\mathcal{K}$ denote the Stasheff associahedron operad and
recall that an $A_n$-algebra is a partial action of $\mathcal{K}$ ignoring
the structure maps involving $\mathcal{K}(m)$ for $m> n$ (see e.g.
\cite{robinson-Ainfty}). 
Since $\mathcal{K}(1)=\mathcal{K}(2)=*$ and $\mathcal{K}(3)=I$,
an $A_3$-algebra in $\sC$ is simply a homotopy associative ring with a choice of
homotopy $\mu(\mu(-,-),-)\hteq \mu(-,\mu(-,-))$.
Given an $A_3$-ring $R$, the notion of an $A_3$-module $M$ over $R$ can be
defined in terms of colored operads (see \cite[\S A.1.1]{GRSO}), but
this boils down to the data of a map $\mu:R\tensor M\to M$ and the choice of a
homotopy between $(R\tensor R)\tensor M\to M$ and $R\tensor(R\tensor M)\to M$.
We work in somewhat more generality.

\begin{definition}\label{def:A3-structure}
Given objects $X''$, $X'$, and $X$ in $\sC$, an $A_3$-structure on the
triple $(X'',X',X)$ consists of the following pieces of structure:
\begin{itemize} 
\item a map $\mu: X'\tensor X \to X$,
\item a map $\mu':X''\tensor X' \to X'$,
\item a map $\mu'':X''\tensor X \to X$, and
\item a map $h:X''\tensor X'\tensor X\tensor I\to X$ whose restrictions to $X''\tensor X'\tensor X\tensor \{ 0 \}$ and $X''\tensor X'\tensor X\tensor \{ 1 \}$ are
$\mu(\mu'(-,-),-)$ and $\mu''(-,\mu(-,-))$, respectively.
\end{itemize}
A map $(X'',X',X)\to (Y'',Y',Y)$ of $A_3$-triples consists of maps $X''\to Y''$,
$X'\to Y'$, and $X\to Y$ such that the obvious diagrams commute strictly.
\end{definition}

To avoid notational clutter, we will use the same symbol $\mu$ to denote $\mu$, $\mu'$
and $\mu''$. We also write $(\mu,h)$ to abbreviate the full
tuple of structure data $(\mu,\mu',\mu'',h)$.

\begin{rmk}
In a model category, a homotopy between two maps $X''\tensor X'\tensor X\to X$
is a map out of $\Cyl(X''\tensor X'\tensor X)$, but this is not a problem for us
as we will eventually precompose with a map out of the unit object $S$, and
cofibrancy of the unit implies that $\Cyl(S) = S\tensor I$.
\end{rmk}


\begin{example}
In the case $X=X'=X''$, an $A_3$-ring structure on $X$ gives rise to an $A_3$-structure on
$(X,X,X)$, and a map $X\to Y$ of $A_3$-rings gives rise to a morphism of $A_3$-structures $(X,X,X)\to (Y,Y,Y)$. More generally, if $X''\to X'$ is a map of $A_3$-rings and $X$
is an $A_3$-module over $X'$, then this endows $(X'',X',X)$ with the structure
of an $A_3$-triple.
\end{example}

\subsection{Toda brackets}
We define the multiplicative Toda bracket $\an{{\alpha},{\alpha}',{\alpha}''}$ following \cite[Definitions 2.2.1, 2.2.2]{kochman-stable-book},
modified to allow for the possibility that the multiplications are not strictly
associative.
Let $CX$ denote the homotopy cofiber of $X\too{\Id} X$.

\begin{definition}\label{def:toda}
Suppose $(\mu,h)$ is an $A_3$-structure on the triple $(X'',X',X)$ and
let $\alpha'':S\to X''$, $\alpha':S\to X'$, and $\alpha:S\to X$ be maps such that $\mu(\alpha',\alpha)$ and $\mu(\alpha'',\alpha')$ are nullhomotopic. Let $H$ be a homotopy from $\mu(\mu(\alpha'',\alpha'),\alpha)$ to $\mu(\alpha'',\mu(\alpha',\alpha))$ defined by
$$ \xymatrix@1@C=40pt{ H: S\cyl I\ \ \  \ar[r]^-{\alpha''\tensor \alpha'\tensor \alpha} &  \ \ \ X''\tensor X'\tensor X\tensor I\ \ \ \ar[r]^-h &  \ \ \ X.} $$
	The \emph{Toda bracket} $\an{\alpha'',\alpha',\alpha}$
	is the set of homotopy classes of maps
	$$ d(\mu(F,\alpha), H, \mu(\alpha'',G)) $$
	where $F:CS \to X'$ is any nullhomotopy of $\mu(\alpha'',\alpha')$ and
	$G:CS\to X$ is any nullhomotopy of $\mu(\alpha',\alpha)$.
(See Remark \ref{rmk:X'-to-X-mult} for multiplication of nullhomotopies.)
\end{definition}

It is helpful to represent this using the following diagram.
$$ \xymatrix@R=10pt{
CS\ar@/^1pc/[rd]^-F
\\S\cyl \{ 0 \}\ar@{^(->}[u]\ar@{_(->}[d]\ar[r]^-{\mu(\alpha'',\alpha')} &
X'\ar[r]^-{1\tensor \alpha} & X'\tensor X\ar[r]^-\mu &  X\ar@{=}[d]
\\S\cyl I\ar[r]^-{\alpha''\tensor \alpha'\tensor \alpha} & (X''\tensor X'\tensor X)\cyl I \ar[rr]^-{h} &  & X\ar@{=}[d]
\\S\cyl \{ 1 \}\ar@{^(->}[u]\ar@{_(->}[d]\ar[r]^-{\mu(\alpha',\alpha)} & X\ar[r]^-{\alpha''\tensor 1} & X''\tensor X\ar[r]^-{\mu} & X
\\CS\ar@/_1pc/[ru]_G
} $$

\begin{rmk}
If $(X'',X',X)$ is strictly associative, we take $h$ to be
the trivial homotopy, in which case $H$ is trivial and this reduces to the
usual definition of $\an{\alpha'',\alpha',\alpha}$ as the set of maps
$$ d(\mu(F,\alpha), \mu(\alpha'',G)) $$
where $F,G$ are nullhomotopies as in Definition \ref{def:toda}.
\end{rmk}

\begin{rmk}
The Toda bracket $\an{\alpha'',\alpha',\alpha}$ is a subset of the composition Toda bracket associated to the
composition
$S\too{\alpha''}X''\too{\alpha'}X'\too{\alpha}X$.
\end{rmk}

\begin{rmk}\label{rmk:compare-H}
	In Definition \ref{def:toda}, if two homotopies $H, H': S\cyl I \to X$ are homotopic relative to endpoints, then
	every element in the Toda bracket defined using $H$ is homotopic to an element in the
	Toda bracket defined using $H'$.
\end{rmk}

\subsection{Massey products}
\label{sec:massey}
\begin{definition}\label{def:massey}
Given $d_r$-cycles $\fra\in E_r(X),\fra'\in E_r(X'),\fra''\in E_r(X'')$ such that $\fra'\fra$ and $\fra''\fra'$
are boundaries,
define the Massey product
$$ \an{\fra'',\fra',\fra} = \{ \frb'\fra - \fra''\frb\st d_r(\frb')=\fra''\fra'\text{ and } d_r(\frb) = \fra'\fra \}. $$
\end{definition}

\begin{rmk}
	The reader may notice two subtle differences from the usual definition (e.g. see \cite[\S A.1.4]{green}) of the Massey product of classes in the $E_{r+1}$-page. First, we have defined $\fra$, $\fra'$, and $\fra''$ as cycles in the $E_r$-page.
	If the pairing on $E_r$ satisfies the Leibniz rule, then our Massey product clearly represents a class in $E_{r+1}$. Our results are only expected to be useful in cases where the Leibniz rule holds, but we do not need to make this assumption for the proofs.
Second, for the missing signs, recall (Remark \ref{rmk:signs}) that we are simplifying notation by assuming that all classes have degree zero.
\end{rmk}

\subsection{Setup}\label{sec:setup}
We now state our assumptions on the multiplicative structure of the filtrations and introduce notation for the elements in the various brackets. This material will be assumed for the rest of the paper. The reader who wishes to skip to the statements of the main theorems should read Assumption \ref{ass:X}, the notation at the beginning of Section \ref{sec:assumptions2}, and Assumption \ref{ass:crossing}.

\subsubsection{Assumptions on pairings}\label{sec:assumptions1}
Now we fix notation and assumptions that will be used in the rest of the paper.
Fix objects $X$, $X'$, and $X''$ in $\sC$ along with a filtration (as in
\eqref{eq:tower}) on each object. We view the filtered objects $X_*$, $X'_*$, $X''_*$ as belonging to the
graded category $\sC^\Z$, where $X_i = X_0$ if $i< 0$. (Here $\Z$ is a
discrete category (no non-identity morphisms), which means that the image of
$X_*$ in $\sC^\Z$ forgets the structure maps $i: X_s\to X_{s-1}$; the
compatibility with the structure maps is addressed below.) We endow $\sC^\Z$ with a
symmetric monoidal structure via Day convolution. More explicitly, given $X_*,
Y_*\in \sC^\Z$ define $(X\tensor Y)_q = \bigsqcup_{i+j = q} X_i \tensor Y_j$
where $\sqcup$ denotes the colimit in $\sC$.

Note that an object $X\in \sC$ can be
viewed as a graded object via the diagonal $d(X)\in \sC^\Z$, defined by $d(X)_s
= X$ for all $s$, and an $A_3$-structure on $(X'',X',X)$ naturally gives rise to
an $A_3$-structure on $(d(X''), d(X'), d(X))$.

The setup (especially \eqref{item:mu_r} and \eqref{item:H_1} below) would be much less complicated if we assumed that the diagrams
\begin{equation}\label{eq:strict-mu0} \xymatrix{
X'_*\tensor X_*\ar[r]^-{\mu_0}\ar[d]  & X_*\ar[d]
\\X'_{*-r}\tensor X_*\ar[r]^-{\mu_0} & X_{*-r}
}\end{equation}
(and similarly for the other pairings) commuted strictly. However, this need not
be the case in our desired application (the motivic slice spectral sequence and more generally, colocalization towers in the framework
of \cite{GRSO}), and instead we must choose assumptions that are weak enough to
be satisfied by colocalization towers, but strong enough to make the main
argument work. In particular, our assumptions are stronger than simply asking
the above diagram to commute up to homotopy.

\begin{custom}{Assumption}\label{ass:X}
Fix $A_3$-structures $(\mu,h)$ on $(X'',X',X)$ and $(\mu_0,h_*)$ on $(X''_*,X'_*,X_*)$. Moreover:

\begin{enumerate} 
\item \label{item:H} Assume there is a map of $A_3$-structures
$(X''_*,X'_*,X_*)\too{\iota} (d(X''), d(X'), d(X))$.
\item \label{item:mu_r}
Assume that there exist maps $\mu_{-r}, \mu_r, \mu^{-r}, \mu^r$ for
$r\geq 1$ making the following diagrams commute.
$$  \xymatrix{
X''_*\tensor X'_*\ar[r]^-{\mu_0}\ar[d] & X'_*\ar[d]
\\X''_*\tensor X'_{*-r}\ar[r]^-{\mu_{-r}}\ar[d] & X'_{*-r}\ar[d]
\\X''\tensor X'\ar[r]^-{\mu} & X'
}
\hspace{30pt}
\xymatrix{
X'_{*+r}\tensor X_*\ar[r]^-{\mu^r}\ar[r]\ar[d] & X_{*+r}\ar[d]
\\X'_*\tensor X_*\ar[r]^-{\mu_0}\ar[d] & X_*\ar[d]
\\X'_{*-r}\tensor X_*\ar[r]^-{\mu^{-r}}\ar[r]\ar[d] & X_{*-r}\ar[d]
\\X'\tensor X\ar[r]^-\mu & X
} \hspace{30pt}
\xymatrix{
X''_*\tensor X_{*+r}\ar[r]^-{\mu_r}\ar[r]\ar[d] & X_{*+r}\ar[d]^i
\\X''_*\tensor X_*\ar[r]^-{\mu_0}\ar[d] & X_*\ar[d]
\\X''\tensor X\ar[r]^-{\mu} & X
} $$
\item \label{item:H_1} Assume that there are homotopies $\til{H}_1$ from $\mu^r$ to $\mu_0$ and $\til{H}_2$ from $\mu_0$ to $\mu_r$ such that
$$ \xymatrix{
X'_*\tensor X_*\tensor I\ar[r]^-{\til{H}_1}\ar[d] & X_*\ar[dd]^\iota
\\X'_*\tensor X_*\ar[d] & 
\\X'\tensor X\ar[r]^-\mu & X
}\hspace{30pt}
\xymatrix{
X''_*\tensor X_*\tensor I\ar[r]^-{\til{H}_2}\ar[d] & X_*\ar[dd]^\iota
\\X''_*\tensor X_*\ar[d]
\\X''\tensor X\ar[r]^-{\mu} & X
}
$$
commute. (Note that $\mu^r$ and $\mu_r$ have been reindexed to be maps
$X'_*\tensor X_*\to X_*$ and $X''_*\tensor X_*\to X_*$, respectively.)
\item \label{item:mu_**} Assume there are maps $\mu_{*,1}$ making the following
diagrams commute.
$$ \xymatrix{
X'_*\tensor X_*\ar[r]^-{\mu_0}\ar[d] & X_*\ar[d]^-p
\\X'_{*,1}\tensor X_{*,1}\ar[r]^-{\mu_{*,1}} & X_{*,1}
} \hspace{30pt}
\xymatrix{
X''_*\tensor X_*\ar[r]^-{\mu_0}\ar[d] & X_*\ar[d]^-p
\\X''_{*,1}\tensor X_{*,1}\ar[r]^-{\mu_{*,1}} & X_{*,1}
} $$

\item \label{item:E_r-pairings} There are pairings of spectral sequences
\begin{align*}
E_r(X')\tensor E_r(X) & \to E_r(X)
\\E_r(X'')\tensor E_r(X) & \to E_r(X)
\end{align*}
induced by $\mu_{*,1}$ and $\mu_{*,1}$. (Recall that $E_1^{*,f}(X) = \pi_*(X_{f,1})$ and the $E_r$-page is a subquotient of the $E_1$-page.)
\end{enumerate}
\end{custom}

\begin{rmk}\label{rmk:filtered}
The synthetic perspective (\cite{GWX}, \cite{pstragowski}, \cite{GIKR}, \cite{burklund-moore}) views spectral sequences as filtered spectra, and
synthetic tools are well-equipped to address questions about monoids in filtered
spectra. Our setting is more general: the setting of modules over
monoids in filtered spectra corresponds to the special case when
\eqref{eq:strict-mu0} and analogous diagrams commute strictly.
In this special case, assumptions \eqref{item:mu_r} and \eqref{item:H_1} hold
trivially.
\end{rmk}

\begin{rmk}
Davis and Snaith \cite[Theorem 2.2]{davis-snaith} give an
abstract proof of the differential and extension theorem for Massey products in
spectral sequences. Their theorem has a hypothesis called the ``exchange
condition.'' If \eqref{eq:strict-mu0} commutes, one can show that our setup
implies the exchange condition.
\end{rmk}


\subsubsection{The homotopy classes $\alpha$, $\alpha'$, $\alpha''$}\label{sec:assumptions2}
Suppose there are maps 
\begin{align*}
{\alpha}: & S\to X & {\alpha}': & S\to X' & \alpha'': & S\to X'',
\end{align*}
with nullhomotopic products $\mu(\alpha',\alpha)\hteq 0\hteq \mu(\alpha'',\alpha')$.
Furthermore, suppose that ${\alpha}$, ${\alpha}'$, and ${\alpha}''$
are detected by nonzero permanent cycles 
\begin{align*}
{\fra} & \in E_r^{*,f-f'}(X) & {\fra}' & \in E_r^{*,f'-f''}(X') & {\fra}'' & \in E_r^{*,f''}(X''),
\end{align*}
respectively, such
that $\fra'\fra$ and $\fra''\fra'$ are $d_r$-boundaries.
Then there are maps 
\begin{align*}
\alpha_{**} & :S\to X_{f-f',1} & \alpha'_{**} & :S\to X'_{f'-f'',1} & \alpha''_{**} & :S\to X''_{f'',1}
\end{align*}
representing $\fra$, $\fra'$, and $\fra''$, and since these
are permanent cycles, they lift to 
\begin{align*}
\alpha_* & :S\to X_{f-f'} & \alpha'_* & :S\to X'_{f'-f''} & \alpha''_* & :S\to X''_{f''}
\end{align*}
satisfying
$\alpha=\iota\alpha_*$ and $\alpha_{**}=p\alpha_*$ (and similarly for $\alpha'$,
$\alpha''$) where $\iota:X_*\to X$ and $p:X_*\to X_{*,1}$ are the maps of graded
objects guaranteed in Assumption \ref{ass:X}\eqref{item:H},\eqref{item:mu_**}.

Strict compatibility of $\iota$ and $p$ with the multiplicative structures  (Assumption \ref{ass:X}\eqref{item:H},\eqref{item:mu_**}) implies that $\iota \mu_0(\alpha'_*,\alpha_*) = \mu(\alpha',\alpha)$ and $p\mu_0(\alpha'_*,\alpha_*) = \mu_{*,1}(\alpha'_{**},\alpha_{**})$,
and Assumption \ref{ass:X}\eqref{item:E_r-pairings} says that $\mu_{*,1}(\alpha'_{**},\alpha_{**}):S\to X_{f-f'',1}$ descends to the $E_r$ product $\fra'\fra$.

Beware that we need not have $\mu_0(\alpha''_*,\alpha'_*)=0=\mu_0(\alpha'_*,\alpha_*)$ or
$\mu_{*,1}(\alpha''_{**},\alpha'_{**})=0=\mu_{*,1}(\alpha'_{**},\alpha_{**})$; compare with Lemma \ref{lem:compositions-null} below.

\begin{custom}{Assumption}\label{ass:crossing}
In the rest of the paper,
assume that $\{E_*(X)\}$ satisfies the crossing differentials hypothesis
(Definition \ref{def:cross}) in degree $(r,|\fra'\fra|)$ and
$\{E_*(X')\}$ satisfies the crossing differentials hypothesis
in degree $(r,|\fra''\fra'|)$. Moreover, we assume that the spectral sequences
$\{ E_*(X) \}$ and $\{ E_*(X') \}$ are weakly convergent (see
Definition \ref{def:weakly-convergent}).
\end{custom}

\begin{lemma}\label{lem:compositions-null}
Given Assumption \ref{ass:crossing},
the following compositions are nullhomotopic.
$$ \xymatrix@C=43pt{
S\ar[r]^-{\mu_0(\alpha'_*,\alpha_*)} & X_{f-f''}\ar[r]^-i & X_{f-f''-r}
\\S\ar[r]^-{\mu_0'(\alpha''_*,\alpha'_*)} & X'_{f'}\ar[r]^-i & X'_{f'-r}
}\hspace{30pt}\xymatrix@C=43pt{
S\ar[r]^-{\mu_{*,1}(\alpha'_{**},\alpha_{**})} & X_{f-f'',1}\ar[r]^-i & X_{f-f''-r,1}\hspace{70pt}
\\S\ar[r]^-{\mu_{*,1}(\alpha''_{**},\alpha'_{**})} & X'_{f',1}\ar[r]^-i & X'_{f'-r,1}\hspace{70pt}
} $$
\end{lemma}
\begin{proof}
By Assumption \ref{ass:X}\eqref{item:H}, $\mu(\alpha',\alpha):S\to X$ factors as
$\iota \mu_0(\alpha'_*,\alpha_*)$. Since this is null and (using Assumption \ref{ass:X}\eqref{item:mu_**},\eqref{item:E_r-pairings})
$\mu_0(\alpha'_*,\alpha_*)$ lifts $\fra'\fra$ which is assumed to be a boundary, Proposition
\ref{prop:crossing-diff} implies that $i\mu_0(\alpha'_*,\alpha_*):S\to X_{f-f''-r}$ is
null. The corresponding graded statement follows from composing with
$p:X_{f-f''-r}\to X_{f-f''-r,1}$ and using Assumption \ref{ass:X}\eqref{item:mu_**}.
The remaining two statements are similar.
\end{proof}

\section{Comparison statements}\label{sec:comparison}
We assume the notation in Section \ref{sec:setup}.
\subsection{Comparing the Toda bracket and Massey product} \label{sec:comparison-Er}
The goal of this section is to prove Theorem \ref{thm:main},
which compares the Toda bracket $\an{\alpha'',\alpha',\alpha}$ with the Massey
product $\an{\fra'',\fra',\fra}$. The general idea is to define another nonempty bracket
$\an{\alpha''_*,i\alpha'_*,\alpha_*}$ and show that there are maps 
$$ \an{\fra'',\fra',\fra}\btoo{p} \an{\alpha''_*,i\alpha'_*, \alpha_*}\too{\iota} \an{\alpha'',\alpha',\alpha} $$
where $\iota: (X''_*,X'_*,X_*)$ is the map from Assumption
\ref{ass:X}\eqref{item:H} and $p:(X''_*,X'_*,X_*)\to (X''_{*,1}\to
X'_{*,1},X_{*,1})$ is from Assumption \ref{ass:X}\eqref{item:mu_**}. This would
be straightforward if the diagram \eqref{eq:strict-mu0} commuted strictly: given
nullhomotopies $F_*$ and $G_*$ of $\mu_0(\alpha''_*,i\alpha'_*)$ and
$\mu_0(i\alpha', \alpha_*)$, respectively, and a homotopy $H_*$ between
$\mu_0(\mu_0(\alpha''_*,\alpha'_*),\alpha_*)$ and
$\mu_0(\alpha''_*,\mu_0(\alpha'_*,\alpha_*))$ we would consider
\begin{equation}\label{eq:star-toda} d(\mu_0(F_*,\alpha_*), iH_*, \mu_0(\alpha''_*,G_*)) \in
\an{\alpha''_*,i\alpha_*, \alpha_*}. \end{equation}
However, with our current assumptions, this element is not well-defined, as
$\mu_0(F_*,\alpha_*):CS\to X_*$ restricted to $S \into CS$ is
$\mu_0(\mu_0(\alpha''_*,i\alpha'_*),\alpha_*)$, which is not an endpoint for
$iH_*$.
Instead, we must use the variety of multiplication maps on $(X''_*,X'_*,X_*)$
described in Assumption \ref{ass:X}\eqref{item:mu_r},\eqref{item:H_1} to write
down a homotopy between (variants of) these two elements, that restricts to the
trivial homotopy after applying $\iota$.

For ease of notation let
\begin{align*}
\alpha_{21} & = \mu_0(\alpha''_*,\alpha'_*)
\\\alpha_{10} & = \mu_0(\alpha'_*,\alpha_*).
\end{align*}
From Lemma \ref{lem:compositions-null} we have nullhomotopies
\begin{align*}
 & F_*:CS \to X'_{*-r} \text{  of }\  i\alpha_{21},  \text{ and}
\\ & G_*:CS\to X_{*-r}\text{ of }\ i\alpha_{10}.
\end{align*}

The setup ensures that the following diagrams commute strictly, except for the
dotted arrows.

$$ \uppercurveobject{{ }}
\twocellhead{{ }}
\xymatrix{
S\ar@/^1pc/[rrr]|-{(\alpha_{21},\, \alpha_*)}\ar[rr]_-{(\alpha''_*,\alpha'_*,\alpha_*)}\ar[d] && X''_*\tensor X'_*\tensor
X_*\ar[r]_-{(\mu_0,\,\Id)}\ar[d]^i & X'_*\tensor
X_*\ar[r]_-{\mu^r}\ar[d]^i\ruppertwocell^{}{^\hspace{28pt}H_1}\ar@{.>}@/^1.5pc/[r]^-{\mu_0} & X_*\ar[d]^i
\\ CS\ar@/_2pc/[rrr]_-{(F_*,\alpha_*)}&& X''_*\tensor X'_{*-r}\tensor X_*\ar[r]^-{(\mu_{-r},\,\Id)} & X'_{*-r}\tensor X_*\ar[r]^-{\mu_0}\ar[d]^p & X_{*-r}\ar[d]^p
\\ && & X'_{*-r,1}\tensor X_{*,1}\ar[r]^-{\mu_{*,1}} & X_{*-r,1}
}$$

$$ \uppercurveobject{{ }}
\twocellhead{{ }}
\xymatrix{
S\ar@/^1pc/[rrr]|-{(\alpha''_*,\, \alpha_{10})}\ar[rr]_-{(\alpha''_*,\alpha'_*,\alpha_*)}\ar[d] && X''_*\tensor X'_*\tensor
X_*\ar[r]_-{(\Id,\,\mu_0)}\ar[d]^i & X''_*\tensor X_*\ar[r]_-{\mu_r}\ar[d]^i\ar[d]^i\ruppertwocell{\ H_2}\ar@{.>}@/^1.5pc/[r]^-{\mu_0} & X_*\ar[d]^i
\\ CS\ar@/_2pc/[rrr]_-{(\alpha''_*,G_*)}&& X''_*\tensor X'_{*-r}\tensor
X_*\ar[r]^-{(\Id,\,\mu^{-r})} &
X''_*\tensor X_{*-r}\ar[r]^-{\mu_0}\ar[d]^p & X_{*-r}\ar[d]^p
\\ && & X''_{*,1}\tensor X_{*-r,1}\ar[r]^-{\mu_{*,1}} & X_{*-r,1}
}$$

Assumption \ref{ass:X}\eqref{item:H},\eqref{item:H_1} allows us to fix the following data:
\begin{itemize} 
\item a homotopy $H_*$ from $\mu_0(\mu_0(\alpha''_*,\alpha'_*),\alpha_*) =
\mu_0(\alpha_{21},\alpha_*)$ to
$\mu_0(\alpha''_*,\mu_0(\alpha'_*,\alpha_*)) = \mu_0(\alpha''_*,\alpha_{10})$
whose composition with $\iota:X_*\to X$ recovers the homotopy $H$ defined by the
$A_3$-structure on the triple $(X'',X',X)$;
\item a homotopy ${H}_1$
from $\mu^r(\alpha_{21},\alpha_*)$ to $\mu_0(\alpha_{21},\alpha_*)$ whose
composition with $\iota:X_*\to X$ is constant at
$\mu(\mu(\alpha'',\alpha'),\alpha)$;
\item a homotopy ${H}_2$ from $\mu_0(\alpha''_*,\alpha_{10})$ to
$\mu_r(\alpha''_*,\alpha_{10})$ whose composition with $\iota:X_*\to X$ is
constant at
$\mu(\alpha'',\mu(\alpha',\alpha))$.
\end{itemize}

Using all of this data we may define the crucial element
\begin{equation}\label{eq:gamma} \gamma = d(\mu_0(F_*,\alpha_*), i{H}_1, iH_*, i{H}_2,
\mu_0(\alpha''_*,G_*)): S \to X_{*-r}. \end{equation}
If \eqref{eq:strict-mu0} was required to commute, 
then ${H}_1$ and ${H}_2$ could be taken to be trivial, and $\gamma$
could be replaced by \eqref{eq:star-toda}. This strategy indeed works in
\S\ref{sec:E1}, where the Massey product is replaced by a Toda bracket of $E_1$ classes.

To state the comparison with the Massey product we need the following elements:
\begin{align*}
\beta^+ & = d(pF_*, j'C\alpha_{21})
\\\beta^- & = d(pG_*, jC\alpha_{10}),
\end{align*}
where
$j: CX_*\to X_{*-r,1}$ and $j':CX'_*\to X'_{*-r,1}$ are as in Lemma \ref{lem:source}.
That lemma implies that $\beta^+$ is the source of the
differential hitting $p\alpha_{21} = \mu_{*,1}(\alpha''_{**},\alpha'_{**})$ and $\beta^-$ is the source of the differential hitting $p\alpha_{10} = \mu_{*,1}(\alpha'_{**},\alpha_{**})$. Thus we have (using Assumption \ref{ass:X}\eqref{item:E_r-pairings}):
\begin{equation}\label{eq:massey} \mu_{*,1}(\beta^+,\alpha_{**}) -
\mu_{*,1}(\alpha''_{**}, \beta^-)\in \an{\fra'',\fra',\fra}.  \end{equation}
The next lemma identifies these two summands with terms that we can
more easily relate to $\gamma$.

\begin{lemma} \label{lem:massey-comparison1}
We have
\begin{align*}
\mu_{*,1}(\beta^+,\alpha_{**})  & \hteq d(\mu_{*,1}(pF_*,\alpha_{**}),
jC\mu^r(\alpha_{21},\alpha_*))
\\\mu_{*,1}(\alpha''_{**},\beta^-) & \hteq
d(\mu_{*,1}(\alpha''_{**},pG_*), jC\mu_r(\alpha''_*,\alpha_{10})).
\end{align*}
\end{lemma}
\begin{proof}
Applying Lemma \ref{lem:colimit} to $\mu_{*,1}(-,\alpha_{**}): X'_{*-r,1}\to
X_{*-r,1}$, we have
\begin{align*}
\mu_{*,1}(\beta^+,\alpha_{**}) & = \mu_{*,1}(d(pF_*, j'C\alpha_{21}),\alpha_{**})
\\ & = d(\mu_{*,1}(pF_*,\alpha_{**}), \mu_{*,1}(j'C\alpha_{21},\alpha_{**})).
\end{align*}
The commutative diagram
$$ \xymatrix@C=5pt{
 & CS\ar[rr]^-{C\alpha_{21}} &  & CX'_*\ar[rr]^-{C\mu^r(-,\alpha_*)}\ar@/^0.5pc/[dddl]_{j'} &  &
 CX_*\ar@/^0.5pc/[dddl]_j
\\S\ar[ru]\ar[rr]^-{\alpha_{21}} &&
X'_*\ar[d]\ar[rr]^-{\mu^r(-,\alpha_*)}\ar[ru] &&
X_*\ar[d]\ar[ru]
\\ && X'_{*-r}\ar[rr]^-{\mu_0(-,\alpha_*)}\ar[d]_p &  & X_{*-r}\ar[d]
\\ && X'_{*-r,1}\ar[rr]^-{\mu_{*,1}(-,\alpha_{**})} &  & X_{*-r,1} 
}$$
shows that $\mu_{*,1}(j'C\alpha_{21},\alpha_{**}) = jC\mu^r(\alpha_{21},
\alpha_*)$ as nullhomotopies $CS\to X_{*-r,1}$. The other statement is similar,
using the factorization $S \too{\alpha_{10}} X_* \ttoo{\mu_r(\alpha''_*,-)}
X_*$ of $\mu_r(\alpha''_*,\alpha_{10})$.
\end{proof}

Next we relate $\gamma$ to a sum involving terms that look like the right hand
side in Lemma \ref{lem:massey-comparison1}.
\begin{lemma} \label{lem:massey-comparison2}
We have
$$ p\circ \gamma \hteq d(p\mu_0(F_*,\alpha_*), jC\mu^r(\alpha_{21},\alpha_*)) -
d(p\mu_0(\alpha''_*,G_*), jC\mu_r(\alpha''_*,\alpha_{10})) $$
where $\gamma$ is defined in \eqref{eq:gamma}.
\end{lemma}
\begin{proof}
Let $f_1 = \mu^r(\alpha_{21},\alpha_*)$ and $g_1 =
\mu_r(\alpha''_*,\alpha_{10})$. We will show that 
\begin{equation}\label{eq:pgamma} p\circ \gamma \hteq d(p\mu_0(F_*,\alpha_*), jCf_1, 0, jCg_1,
p\mu_0(\alpha''_*,G_*)); \end{equation}
this suffices because it is homotopic to $d(p\mu_0(F_*,\alpha_*),jCf_1) +
d(jCg_1,p\mu_0(\alpha''_*,G_*))$, which is homotopic to the desired difference by
Lemma \ref{lem:difference}.
Recall that for any
$f:S\to X_*$ we have a nullhomotopy 
$$ CS\too{Cf} CX_*  \too{j} X_{*-r}\djunion{X_*} CX_* = X_{*-r,r} \to X_{*-r,1} $$
of $p\circ i\circ f$, and moreover this is functorial in $f$.
We represent the desired homotopy in \eqref{eq:pgamma} as follows, where the top row represents $p \circ \gamma$ and the bottom row represents the right hand side of
\eqref{eq:pgamma}.
For ease of notation we omit $j$ and $p:CX_{*-r}\to X_{*-r,1}$ from the notation in some places.

\begin{tikzpicture} 
\draw[-] (0,2) to node[above]{$p\mu_0(F_*,\alpha_*)$} (3,2) to
node[above]{$pi{H}_1$} (6,2) to
node[above]{$piH_*$} (9,2) to node[above]{$pi{H}_2$} (12,2) to
node[above]{$p\mu_0(\alpha''_*,G_*)$} (15,2);
\draw[-] (0,0) to node[below]{$p\mu_0(F_*,\alpha_*)$} (3,0) to node[below]{$Cf_1$} (6,0) to
node[below]{0} (9,0) to node[below]{${Cg_1}$} (12,0) to
node[below]{$p\mu_0(\alpha''_*,G_*)$} (15,0);
\coordinate[label={$=$}](A) at (1.5,0.8);
\coordinate[label={$=$}](A) at (13.5,0.8);
\coordinate[label={$Cf_1$}](A) at (3.75,0.4);
\coordinate[label={$C(-)$}](A) at (5.25,1.2);
\coordinate[label={$C(-)$}](A) at (7.5,0.8);
\coordinate[label={$C(-)$}](A) at (9.75,1.2);
\coordinate[label={$Cg_1$}](A) at (11.25,0.4);
\coordinate[label={\tiny $Cf_2$}](A) at (6.25,0.8);
\coordinate[label={\tiny $Cg_2$}](A) at (8.75,0.8);
\draw[-] (3,2) to node[sloped,above,inner sep=0pt]{\tiny $Cf_1$} (6,0);
\draw[-] (12,2) to node[sloped,above,inner sep=0pt]{\tiny $Cg_1$} (9,0);
\node[pin={[pin edge=<-, pin distance=10pt]90:{\small $f_1=\mu^r(\alpha_{21},\alpha_*)$}}] (v2) at (3,2)  {};
\node[pin={[pin edge=<-, pin distance=10pt]90:{\small $f_2=\mu_0(\alpha_{21},\alpha_*)$}}] (v2) at (6,2)  {};
\node[pin={[pin edge=<-, pin distance=10pt]90:{\small $g_2=\mu_0(\alpha''_*,\alpha_{10})$}}] (v3) at (9,2)  {};
\node[pin={[pin edge=<-, pin distance=10pt]90:{\small $g_1=\mu_r(\alpha''_*,\alpha_{10})$}}] (v3) at (12,2)  {};
\draw[-] (0,0) to (0,2);
\draw[-] (3,0) to (3,2);
\draw[-] (6,0) to (6,2);
\draw[-] (9,0) to (9,2);
\draw[-] (12,0) to (12,2);
\draw[-] (15,0) to (15,2);
\end{tikzpicture}
\end{proof}

\begin{prop} \label{prop:massey-comparison}
We have $p\circ \gamma \in \an{\fra'',\fra',\fra}$.
\end{prop}
\begin{proof}
Combine \eqref{eq:massey} with Lemmas \ref{lem:massey-comparison1} and \ref{lem:massey-comparison2}.
\end{proof}

Now that we have shown compatibility of $\gamma$ with the Massey product, it
remains to show compatibility with the Toda bracket.

\begin{prop} \label{prop:toda-comparison}
We have $\iota \circ \gamma \in \an{\alpha'',\alpha',\alpha}$, where
$\iota:X_*\to X$ is the map from Assumption \ref{ass:X}\eqref{item:H}.
\end{prop}
\begin{proof}
We have
\begin{align}
\notag \iota \circ \gamma &= d(\iota \mu_0(F_*,\alpha_*), \iota {H}_1, \iota H_*,
\notag \iota {H}_2, \mu_0(\alpha''_*,G_*))
\\\notag  & = d(\mu(\iota F, \alpha), \Id, \iota H_*, \Id, \mu(\alpha'',\iota G_*))
\\\label{eq:aaa-toda} & \hteq d(\mu(\iota F, \alpha), \iota H_*, \mu(\alpha'', \iota G_*)),
\end{align}
where the first equality is an application of Lemma \ref{lem:colimit} and the
second uses the assumptions on $\iota i{H}_1 = \iota {H}_1$ and $\iota
i{H}_2 = \iota {H}_2$ and Assumption \ref{ass:X}\eqref{item:H} for the
compatibility of $\mu$ with $\mu_0$. By Assumption \ref{ass:X}\eqref{item:H},
$\iota H_*$ is the homotopy in the $A_3$ structure on $(X'',X',X)$, and $\iota
F_*$ and $\iota G_*$ are nullhomotopies of $\mu(\alpha'',\alpha')$ and
$\mu(\alpha',\alpha)$, respectively. Thus \eqref{eq:aaa-toda} is an element of
the Toda bracket.
\end{proof}

Now we obtain the main theorem. The indexing differs slightly from the version in Theorem \ref{thm:main-intro}.
\begin{thm}\label{thm:main}
Fix $r\geq 1$.
Given the setup in sections \ref{sec:assumptions1} and \ref{sec:assumptions2}
(including Assumption \ref{ass:crossing}),
there exists an element of the
Massey product $\an{\fra'',\fra',\fra}$ that is a permanent cycle
converging to an element of $\an{\alpha'',\alpha',\alpha}$.
\end{thm}
\begin{proof}
Define $\gamma\in \pi_*(X_{f-r})$ as in \eqref{eq:gamma}. By Proposition
\ref{prop:massey-comparison}, the projection of $\gamma$ to the $E_1$-page is an
element of the Massey product $\an{\fra'',\fra',\fra}$. On the other hand, this element
detects $\iota \gamma$, which is in $\an{\alpha'',\alpha',\alpha}$ by
Proposition \ref{prop:toda-comparison}.
\end{proof}

\subsection{Toda brackets in the $E_1$-page}\label{sec:E1}
In the previous section, we worked with $E_r$-page classes $\fra,\fra',\fra''$ for
$r\geq 1$ such that $\fra'\fra=0=\fra''\fra'$ in $E_{r+1}(X)$. In this section, we consider $\alpha$, $\alpha'$, $\alpha''$ with $\mu(\alpha',\alpha)=0=\mu(\alpha'',\alpha')$ that are detected by $\fra,\fra',\fra''$ in $E_1^{f-f'}(X)$,
$E_1^{f'-f''}(X')$, and $E_1^{f''}(X'')$, respectively, such that $\fra'\fra=0=\fra''\fra'$ in $E_1(X)$.

In this case, we cannot define a corresponding Massey product. Instead, we recall that $E_1^f(X) = \pi_*(X_{f,1})$ by definition, and our setup allows us to consider Toda brackets of such homotopy elements.
More precisely, there is a Toda bracket $\an{\fra'',\fra',\fra}$ given by the classes
$S\too{\fra''} X''_{f'',1}$, $S\too{\fra'} X'_{f'-f'',1}$, and
$S\too{\fra}X_{f-f',1}$, where the associativity homotopy is given by composing the associativity homotopy $H_*$ used to define $\an{\alpha''_*,\alpha'_*,\alpha_*}$ with $p:X_*\to X_{*,1}$.
The goal of this section is to prove Theorem \ref{thm:E1}, which is the analogue of Theorem \ref{thm:main} where the Massey product is replaced by $\an{\fra'',\fra',\fra}$.
We still make Assumption \ref{ass:crossing} in degrees $(0, |\fra'\fra|)$ and $(0, |\fra''\fra'|)$; despite the motivation about crossing differentials, this does not require the definition of a $d_0$ differential.


Proposition \ref{prop:crossing-diff} does not make sense in this case; the replacement is Lemma \ref{lem:crossing-diff-E1}.
\begin{lemma}\label{lem:crossing-diff-E1}
Suppose $\zeta\in \pi_n(X_f)$ is a class whose images in both $\pi_n(X)$ and
$\pi_n(X_{f,1})$ are zero.
Suppose $\{E_*(X)\}$ is weakly convergent and satisfies the crossing differentials hypothesis in degree $(0, n,f)$. Then $\zeta = 0$.
\end{lemma}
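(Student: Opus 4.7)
The plan is to prove, by upward induction on $0 \le m \le f$, that the image $\zeta_m$ of $\zeta$ in $\pi_n(X_m)$ is zero. The case $m = 0$ is one of the hypotheses, and the case $m = f$ gives the desired conclusion $\zeta = 0$. For the inductive step, fix $1 \le m \le f$ and assume $\zeta_{m-1} = 0$. From the long exact sequence associated to the cofiber sequence $X_m \to X_{m-1} \to X_{m-1,1}$, the vanishing of $\zeta_{m-1}$ means that $\zeta_m = \kappa(b)$ for some $b \in \pi_{n+1}(X_{m-1,1}) = E_1^{n+1,m-1}$.

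The second hypothesis, that $\zeta$ maps to zero in $\pi_n(X_{f,1})$, implies that $\zeta$ lifts to an element of $\pi_n(X_{f+1})$; in particular, $\kappa(b) = \zeta_m$ itself lifts to $\pi_n(X_{f+1})$. By Lemma \ref{lem:sseq-easy}, $b$ belongs to $Z_r^{n+1,m-1}$ for every $r \le f - m + 2$, so it determines a class in $E_{f-m+2}^{n+1,m-1}$. Setting $m' = m-1$ we have $0 \le m' \le f-1$, so the crossing differentials hypothesis at the $E_0$-page in degree $(n,f)$ (Definition \ref{def:cross} with $r = 0$) asserts that this class is a permanent cycle.

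Interpreting ``permanent cycle'' via the definition following Lemma \ref{lem:sseq-easy} as ``admits a lift to $\pi_{n+1}(X_{m-1})$'', we conclude that $b$ itself lifts to $\pi_{n+1}(X_{m-1})$. Exactness of the long exact sequence then forces $\kappa(b) = 0$ in $\pi_n(X_m)$, i.e., $\zeta_m = 0$, completing the induction.

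The step most in need of care is the translation in the last paragraph: the crossing differentials hypothesis furnishes a lift only of some representative of the $E_r$-class of $b$ in $\pi_{n+1}(X_{m-1,r})$, whereas we need a lift of $b \in \pi_{n+1}(X_{m-1,1})$ itself. The key point is that, by Lemma \ref{lem:Ztil}, the map $\pi_{n+1}(X_{m-1,r}) \to \pi_{n+1}(X_{m-1,1})$ factors through $\widetilde{Z}_r^{n+1,m-1}$, so two representatives of the same $E_r$-class have equal image in $\pi_{n+1}(X_{m-1,1})$; hence if some representative lifts to $\pi_{n+1}(X_{m-1})$, then so does $b$.
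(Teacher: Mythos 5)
Your induction is, up to packaging, the paper's own argument run in the contrapositive: the paper supposes $\zeta\neq 0$, takes the minimal $i$ with $\zeta_i\neq 0$ (so $\zeta_{i-1}=0$), pulls $\zeta_i$ back to $z\in\pi_{n+1}(X_{i-1,1})$, notes that $\zeta$ lifts to $\pi_n(X_{f+1})$, and concludes that $z$ supports a nontrivial differential crossing degree $(n,f)$, contradicting the hypothesis. So the approach and all the ingredients match; your version just spells out the same step at every filtration.

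There is, however, a flaw in the paragraph you yourself flag as the delicate one. Lemma \ref{lem:Ztil} says that two elements of $\pi_{n+1}(X_{m-1,r})$ with the same image in $\til{Z}_r^{n+1,m-1}\cong Z_r^{n+1,m-1}$ have equal image in $\pi_{n+1}(X_{m-1,1})$. But ``the same $E_r$-class'' is the coarser relation: $E_r^{n+1,m-1}=Z_r^{n+1,m-1}/B_r^{n+1,m-1}$, so two representatives of $[b]_r$ may have images in $\pi_{n+1}(X_{m-1,1})$ differing by an element of $B_r^{n+1,m-1}$, and your assertion that they have \emph{equal} image is false in general. The conclusion you need still holds, but for a different reason: if some representative $\til{b}'$ of $[b]_r$ lifts to $\pi_{n+1}(X_{m-1})$, then its image in $\pi_{n+1}(X_{m-1,1})$ is $b+c$ with $c\in B_r^{n+1,m-1}$, and $c$ lifts to $\pi_{n+1}(X_{m-1})$ by the very definition of $B_r$; hence $b=(b+c)-c$ also lifts, so $\kappa(b)=\zeta_m=0$ by exactness. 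With that one-sentence repair your proof is complete.
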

\begin{proof}
If not, then there exists $k\leq f$ such that the image of $\zeta$ in $\pi_n(X_k)$
is nonzero but its image in $\pi_n(X_{k-1})$ is zero. This means that there
exists $z\in \pi_{n+1}(X_{k-1,1})$ such that $\kappa(z)$ lifts to
$\zeta\in \pi_n(X_f)$. Then $d_r(z)\neq 0$ for some $r$:
if not, then $\zeta$ lifts to $\pi_n(X_{f+r})$ for all $r$, and in the notation
of Definition \ref{def:weakly-convergent} we would have $i\zeta \in \ker(R_k\to
R_{k-1})$, contradicting weak convergence. Moreover, since $p(\zeta)\in \pi_n(X_{f,1})$ is zero, $\zeta$ lifts to $\pi_n(X_{f+1})$, and so $r > f-(k-1)$.
So $z$ is a class in $E_{f-k+2}^{n+1,k-1}$ that is not a permanent cycle,
contradicting the crossing differentials hypothesis.
\end{proof}

\begin{prop} \label{prop:thm-part1-E1}
Given Assumption \ref{ass:crossing} with $r=0$, 
$\mu_0(\alpha'_*,\alpha_*):S\to X_{f-f'}$ and $\mu_0(\alpha''_*,\alpha'_*):S\to X'_{f'}$
are null. For every element $\gamma_*\in \an{\alpha''_*,\alpha'_*,\alpha_*}$, the composition $S\too{\gamma_*} X_f\too{\iota} X$
is an element of
$\an{\alpha'',\alpha',\alpha}$.
\end{prop}
\begin{proof}
The first sentence follows by Lemma \ref{lem:crossing-diff-E1}. Let $F_*$ and
$G_*$ be nullhomotopies of these two classes, and let $H_*$ be a homotopy
between $\mu_0(\mu_0(\alpha''_*,\alpha'_*),\alpha_*)$ and
$\mu_0(\alpha''_*,\mu_0(\alpha'_*,\alpha_*))$. Let
$$ \gamma_* = d(\mu_0(F_*,\alpha_*), H_*, \mu_0(\alpha''_*,G_*)). $$
By Assumption \ref{ass:X}\eqref{item:H}, we have
$\iota \circ \gamma_* = d(\mu(\iota F_*, \alpha), \iota H_*, \mu(\alpha'',\iota G_*))$,
which is an element of the Toda bracket $\an{\alpha'',\alpha',\alpha}$.
\end{proof}

\begin{prop}\label{prop:thm-part2-E1}
Given Assumption \ref{ass:crossing} with $r=0$, every class in the Toda bracket
$\an{\alpha''_*,\alpha'_*,\alpha_*}$ projects to a class in the Toda bracket
$\an{\fra'',\fra',\fra}$.
\end{prop}
\begin{proof}
This holds using the same straightforward argument as Proposition
\ref{prop:thm-part1-E1}, along with Assumption \ref{ass:X}\eqref{item:mu_**}.
\end{proof}

\begin{thm}\label{thm:E1}
Suppose $\mu(\alpha',\alpha)\hteq 0 \hteq \mu(\alpha'',\alpha')$ and
$\mu_{*,1}(\fra',\fra) \hteq 0 \hteq \mu_{*,1}(\fra'',\fra)$ where $\fra\in\pi_*(X_{f-f',1})$ is an $E_1$ representative of $\alpha\in\pi_*(X)$, and similarly for $\fra'$ and $\fra''$.
Given Assumption \ref{ass:crossing} with $r=0$,
there is a permanent cycle in the Toda bracket $\an{\fra'',\fra',\fra}$ converging to an element of
$\an{\alpha'',\alpha',\alpha}$.
\end{thm}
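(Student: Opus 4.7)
The plan is to combine Propositions \ref{prop:E1} and \ref{prop:thm-part2-E1}, using the intermediate filtered Toda bracket $\an{\alpha,\alpha',\alpha''}_*$ as a bridge. The key observation is that a single filtered element $\zeta:\Sigma S\to X_f$ simultaneously projects to a permanent cycle in $\an{a,a',a''}_{*,*}$ (via Proposition \ref{prop:E1}) and maps to an element of $\an{\alpha,\alpha',\alpha''}$ (via Proposition \ref{prop:thm-part2-E1}).

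First I would verify that $\an{\alpha,\alpha',\alpha''}_*$ is well-defined and nonempty. By Proposition \ref{prop:thm-part2-E1}, the filtered compositions $\alpha'\alpha'':S\to X'_{f'}$ and $\alpha\alpha':S\to X_{f-f''}$ are null; the proof of this nullity uses Lemma \ref{lem:crossing-diff-E1} together with Assumption \ref{ass:crossing-E1}, applied to the observation that both filtered products project to $0$ in $\pi_*(X_{\star,1})$ (since $aa'=0=a'a''$) and to $0$ in $\pi_*(X)$ (since $\alpha\alpha'=0=\alpha'\alpha''$). Choosing any nullhomotopies $F_*:CS\to X'_{f'}$ of $\alpha'\alpha''$ and $G_*:CS\to X_{f-f''}$ of $\alpha\alpha'$, and gluing as in Definition \ref{def:toda}, produces an element $\zeta:\Sigma S\to X_f$ of $\an{\alpha,\alpha',\alpha''}_*$.

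Now fix such a $\zeta$. Applying Proposition \ref{prop:E1} produces an element $z\in\an{a,a',a''}_{*,*}\subseteq\pi_*(X_{f,1})$ obtained as the image of $\zeta$ along $\pi_*(X_f)\to\pi_*(X_{f,1})$; because this class lifts to $\zeta\in\pi_*(X_f)$, Lemma \ref{lem:sseq-easy} identifies it as a permanent cycle. Applying Proposition \ref{prop:thm-part2-E1} to the same $\zeta$ shows that its image along $\pi_*(X_f)\to\pi_*(X)$ lies in $\an{\alpha,\alpha',\alpha''}$. Since both outputs are the two natural images of a common class $\zeta\in\pi_*(X_f)$, the spectral sequence convergence guarantees that the permanent cycle $z$ detects the Toda bracket element, which is exactly the desired conclusion. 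There is no substantive obstacle beyond the formal combination of the two propositions; the nonemptiness check in the first paragraph is the only mildly delicate step, and it is handled entirely by Lemma \ref{lem:crossing-diff-E1}.
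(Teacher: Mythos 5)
Your proposal is correct and follows the same route as the paper, whose proof is simply to combine Propositions \ref{prop:E1} and \ref{prop:thm-part2-E1}; the extra detail you supply (nonemptiness of $\an{\alpha,\alpha',\alpha''}_*$ via Lemma \ref{lem:crossing-diff-E1}, and the identification of the projection as a permanent cycle detecting the bracket element) is exactly the content those propositions and Lemma \ref{lem:sseq-easy} are meant to carry.
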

\begin{proof}
Combine Propositions \ref{prop:thm-part1-E1} and \ref{prop:thm-part2-E1} as in the proof of Theorem \ref{thm:main}.
\end{proof}

\section{Applications}\label{sec:examples}
\subsection{Strict compatibility of $i$ and $\mu$}
For many important examples, the diagram \eqref{eq:strict-mu0} is strictly
commutative, which implies Assumption
\ref{ass:X}\eqref{item:mu_r},\eqref{item:H_1} by setting $\til{H}_1$ and
$\til{H}_2$ to be constant, and $\mu_0=\mu_{-r}=\mu_r=\mu^{-r}=\mu^r$.

\begin{eg} \label{example:Adams}
Let $E$ be a commutative, associative ring object in $\sC$ and
consider the $E$-based Adams spectral sequence for computing the homotopy of an
$A_3$-ring object $R$ of $\sC$. This arises from the tower
$$ \xymatrix{
R\ar[d] & \bar{E}\tensor R\ar[l]\ar[d] & \bar{E}^{\tensor 2}\tensor R\ar[l]\ar[d] & \dots\ar[l]
\\E\tensor R & E\tensor \bar{E}\tensor R & E\tensor \bar{E}^{\tensor 2}\tensor R
}$$
where $\bar{E}$ is the homotopy fiber of the unit map $S\to E$. Write $X_s =
\bar{E}^{\tensor s}\tensor R$. Then the multiplication maps on the filtered
object
$$ \bar{E}^{\tensor s}\tensor R\tensor \bar{E}^{\tensor t}\tensor R \to
\bar{E}^{\tensor s}\tensor \bar{E}^{\tensor t}\tensor R\tensor R\to
\bar{E}^{\tensor s+t}\tensor R $$
arise from commuting $R$ past $\bar{E}^{\tensor t}$ and using the multiplication
on $R$. 
It is straightforward to see that the diagram \eqref{eq:strict-mu0} is strictly
commutative, and that Assumptions \ref{ass:X}\eqref{item:H},\eqref{item:mu_**}
hold. The induced pairing of $E_r$-pages (Assumption \ref{ass:X}\eqref{item:E_r-pairings})
is standard; see e.g. \cite[Theorem 2.3.3]{green}.
\end{eg}

\begin{eg}
The May spectral sequence 
$$ E_1 = \F_2[h_{i,j}\st i>0, j\geq 0] \implies \Ext_A^{*,*}(\F_2,\F_2) $$
where $A$ is the dual Steenrod algebra arises from a filtration of the cobar complex
$A^{\tensor *}$ defined by the grading $|\xi_i^{2^j}|=2i-1$ (see \cite[proof of Theorem 3.2.3]{green}). The cobar complex is strictly associative, and the structure maps in the filtration arise from inclusion of subcomplexes, and so \eqref{eq:strict-mu0} commutes.
The other parts of Assumption \ref{ass:X} are straightforward, and Theorem \ref{thm:main} recovers the May Convergence Theorem \cite[Theorem 4.1]{may-matric}.
\end{eg}

\subsection{Colocalization towers} \label{sec:colocalization}
Our setting is designed to fit with the theorems proved by
Gutierrez, R\"ondigs, Spitzweck, and {\O}stv{\ae}r \cite{GRSO} on
multiplicativity of colocalization towers.

Suppose $\sM$ is a stable combinatorial simplicial symmetric monoidal proper
model category with cofibrant unit. Let
$$ \dots \supseteq \cC_{-1} \supseteq \cC_0 \supseteq \cC_1 \supseteq \cC_2 \supseteq \dots $$
be a family
of full subcategories of $\Ho(\sM)$ satisfying the axioms in \cite[\S2.1]{GRSO}.
In particular, $\cC_i$ is the homologically non-negative part of some
$t$-structure on $\Ho(\sM)$ (see \cite[\S2.1]{GRSO}), and so
there are corresponding colocalization and localization
functors $c_i$ and $l_i$. By \cite[Lemma 2.1]{GRSO} these functors can be lifted
to colocalization and localization functors on $\mathcal{M}$.
For example (\cite[\S3.4]{GRSO}), if $\cC$ has an accessible $t$-structure where
$\cC_{\geq 0}$ is a symmetric monoidal subcategory, then the collection
$\{\cC_{\geq 0}[i] \st i\in \Z\}$ satisfies the axioms.

Our goal is to prove the following.

\begin{prop}\label{prop:GRSO-assumptions}
Let $X',X''$ be $A_3$-rings with an $A_3$-map $X''\to
X'$, and let $X$ be an $A_3$-$X'$-module. 
Assume the existence of colocalization and localization functors $c_i,l_i$ as
above, and consider the filtrations on $X''$, $X'$, and $X$ defined by $X''_i =
c_iX''$ and so on. Then $(X''_*,X'_*,X_*)$ can be given a multiplicative
structure satisfying Assumption \ref{ass:X}\eqref{item:H}--\eqref{item:mu_**}.
\end{prop}

We use the machinery of colored operads; see e.g. \cite[Appendix A]{GRSO} for an
introduction. Let $\mathcal{K}$ denote the Stasheff $A_3$-operad. We encode
the structure on the triple $(X'',X',X)$ in Proposition
\ref{prop:GRSO-assumptions} as an algebra structure over the following operad.
Define the colored operad $\Kmod$ with colors $\{ x'',x',x \}$ as follows:
\begin{align*}
\Kmod(t_1,\dots,t_n;t) & = \begin{cases}
\mathcal{K}(n) & \text{ if $t=x''$ and $t_i =x''$ for all $i$}
\\\mathcal{K}(n) & \text{ if $t=x'$ and $t_i \in \{ x',x'' \}$ for all $i$}
\\\mathcal{K}(n) & \text{ if $t=x$ and exactly one $t_i$ equals $x$}
\\0 & \text{ otherwise.}
\end{cases}
\end{align*}
In particular, the map $X''\to X'$ comes from the canonical element in
$\Kmod(x'';x')=\mathcal{K}(1)$, and the homotopy between $\mu(\mu(-,-),-)$ and
$\mu(-,\mu(-,-))$ comes from $\Kmod(x'',x',x;x) = \mathcal{K}(3)=\Delta^1$.
Then triples $(X'',X',X)$ satisfying the hypotheses of Proposition
\ref{prop:GRSO-assumptions} are $\Kmod$-algebras.


The strategy in \cite[\S2.3]{GRSO} for studying graded objects is to apply the
general setup to the categories $\cD_r = \prod_{i\in \Z}\cC_{i+r} \subseteq
\cC^\Z$. Let $d:\cC\to \cC^\Z$ denote the diagonal functor. There are corresponding colocalization and localization functors $c'_r$ and $l'_r$
where $c'_rd(X)_n = c_{r+n}X$. We consider the graded object $X_*$ where $X_s = c_s X$, and
write $X_{*+r}:=c'_rd(X)$.
Note that $(d(X''),d(X'),d(X))$ is an $\Kmod$-algebra in $\cC^{\Z}$. For ease of
notation, we denote this as simply $(X'',X',X)$.

The next technical lemma contains the main input we need from \cite{GRSO}. It
requires the following notation.
Given a colored operad $\OO$ and compatibly indexed collections $\mathbf{X},\mathbf{Y}$, let
$\Hom_{\OO}(\mathbf{X},\mathbf{Y})$ be the colored collection with
$$\Hom_{\OO}(\mathbf{X},\mathbf{Y})(t_1,\dots,t_n;t) = \begin{cases}
\Hom(\mathbf{X}(t_1)\tensor \dots \tensor \mathbf{X}(t_n);\mathbf{Y}(t)) &
\text{ if }\OO(t_1,\dots,t_n;t)\neq 0\\0 & \text{ otherwise.} \end{cases}$$

\begin{lemma} [{\cite[Theorem 4.1 and proof of Proposition 5.14]{GRSO}}]
\label{lem:GRSO-4.1}
Suppose $\OO$ is a $\mathcal{J}$-colored operad, and for
every color $t\in \mathcal{J}$ there is a colocalization $c_{s(t)}$ (with $s(t)\in \Z$) such that
$\OO(t_1,\dots,t_n;t)=0$ if $s(t_1)+ \dots + s(t_n) < s(t)$.
Given $\mathbf{X}\in \mathcal{C}^{\mathcal{J}}$, define
$Q\mathbf{X}(t) = c_{s(t)} \mathbf{X}(t)$, and
define the $\mathcal{J}$-colored collection $\mathcal{P}$ so
$\mathcal{P}(t_1,\dots,t_n;t)$ is the space of commutative diagrams
$$ \xymatrix{
Q\mathbf{X}(t_1)\tensor \dots \tensor Q\mathbf{X}(t_n)\ar[r]\ar[d] & Q\mathbf{X}(t)\ar[d]
\\\textbf{X}(t_1)\tensor \dots \tensor \textbf{X}(t_n)\ar[r] & \mathbf{X}(t).
}$$
Then the natural map $\tau:\mathcal{P}\to \Hom_{\OO}(\mathbf{X},\mathbf{X})$ is a trivial fibration
in the semi model structure from \cite[\S A.2]{GRSO}. Let $\OO_\infty$ be a
cofibrant resolution for $\OO$.
If $\mathbf{X}$ is an $\OO$-algebra, then there is an $\OO_\infty$-algebra structure $\psi$ on $Q\mathbf{X}$
that is compatible with the $\OO$-algebra structure on $\mathbf{X}$ in the
sense that
there is a commutative diagram of colored collections
$$ \xymatrix{
 &  & \mathcal{P}\ar[d]^\tau
\\ \Oiy\ar[r]\ar@{.>}[rru]^-\psi & \OO\ar[r] & \Hom_{\OO}(\mathbf{X},\mathbf{X}).
}$$
\end{lemma}

\begin{lemma}\label{item:GRSO-2}
Given the setup above, there is an $A_3$-structure on $(X''_*,X'_*,X_*)$
satisfying Assumption \ref{ass:X}\eqref{item:H},\eqref{item:mu_r}.
\end{lemma}

\begin{proof}
Reindex Assumption \ref{ass:X}\eqref{item:mu_r} so we are looking for the
following  diagrams along with homotopy associativity for the two
multiplications $X''_*\tensor X'_{*+r}\tensor X_*\to X_{*+r}$.
$$  \xymatrix{
X''_*\tensor X'_{*+r}\ar[r]^-{\mu_0}\ar[d] & X'_{*+r}\ar[d]
\\X''_*\tensor X'_*\ar[r]^-{\mu_{-r}}\ar[d] & X'_*\ar[d]
\\X''\tensor X'\ar[r]^-{\mu} & X'
}
\hspace{30pt}
\xymatrix{
X'_{*+2r}\tensor X_*\ar[r]^-{\mu^r}\ar[r]\ar[d] & X_{*+2r}\ar[d]
\\X'_{*+r}\tensor X_*\ar[r]^-{\mu_0}\ar[d] & X_{*+r}\ar[d]
\\X'_*\tensor X_*\ar[r]^-{\mu^{-r}}\ar[r]\ar[d] & X_*\ar[d]
\\X'\tensor X\ar[r]^-\mu & X
} \hspace{30pt}
\xymatrix{
X''_*\tensor X_{*+2r}\ar[r]^-{\mu_r}\ar[r]\ar[d] & X_{*+2r}\ar[d]^i
\\X''_*\tensor X_{*+r}\ar[r]^-{\mu_0}\ar[d] & X_{*+r}\ar[d]
\\X''\tensor X\ar[r]^-{\mu} & X
} $$
The strategy is as follows:
the reindexing enables us to first construct $\mu_{-r}$ and $\mu^{-r}$ by constructing a $\Kmod$-structure on $(X''_*,X'_*,X_*)$. Then, we construct $\mu_0$ compatible with this by using a new operad $\OO$ inspired by the morphism operad $\Mor_{\Kmod}$ of \cite[A.1.2]{GRSO} to obtain the appropriate multiplicative structure on the colocalization map $(X''_*,X'_{*+r},X_*)\to (X''_*,X'_*,X_*)$.

We will focus on the middle diagram, as
the other two diagrams are analogous. Moreover, we only need to construct the
bottom two squares; for the top square, $X'_{*+2r}\tensor X_*\to X'_{*+r}\tensor
X_*\too{\mu_0} X_{*+r}$ factors through $X_{*+2r}$ by the universal property of the
colocalization. (Note that we cannot simply construct
$\mu^{-r}$ and $\mu_0$ in this way, because we also need to show $\mu_0$ is
homotopy-associative after appropriate indexing; this will come from the
subsequent operad arguments.)
By Lemma \ref{lem:GRSO-4.1} with $s(t)=0$ for all colors $t$, we obtain a $\Kmod_\infty$-algebra structure $\psi_1$ on
$(X''_*,X'_*,X_*)$ compatible with $(X'',X',X)$. Choose $\bar{\mu}^{-r}\in
\Kmod_\infty(x',x;x)$ and let $\mu^{-r}=\psi_1(\bar{\mu}^{-r})$.

To construct $\mu_0$ compatible with $\mu^{-r}$,
let $\OO$ be the colored operad with color set
$$ \mathcal{J} = \{ x''_0,x'_{2r},x'_r,x'_0, x_{2r},x_r,x_0 \} $$
and 
$$ \OO(t_1,\dots,t_n;t) = \begin{cases} \Kmod_\infty(t_1,\dots,t_n;t) &
\text{ if } \sum_i sub(t_i) \geq sub(t)
\\0 & \text{ otherwise}
\end{cases} $$
where $sub(x_r)=r$, $sub(x_0)=0$, and so on.
An $\OO$-algebra is a tuple $(A_0,B_{2r},B_r,B_0,M_{2r},M_r,M_0)$
with contractible spaces of structure maps such as $M_r\to M_0$ and $B_r\tensor M_0\to M_r$.

Since the structure maps for $\OO$ come from $\Kmod_\infty$, the $\Kmod_\infty$-algebra structure
on $(X''_*,X'_*,X_*)$ can be repackaged as an $\OO$-algebra structure on
$\mathbf{X}_*:=(X''_*, X'_*,X'_*,X'_*,X_*,X_*,X_*)$.
Lemma \ref{lem:GRSO-4.1} applies where $s$ is the subscript function $sub$, and
gives an $\OO_\infty$-algebra structure $\psi$ on
$$ Q\mathbf{X}_* := (X''_*,X'_{*+2r},X'_{*+r},X'_*,X_{*+2r},X_{*+r},X_*). $$
This parametrizes a family of multiplications $X'_{*+r}\tensor X_*\to
X_{*+r}$, and we need to check that there exists one making the following diagram commute.
\begin{equation}\label{eq:middle-square} \xymatrix{
X'_{*+r}\tensor X_*\ar[r]^-{\mu_0}\ar[d] & X_{*+r}\ar[d]
\\X'_*\tensor X_*\ar[r]^-{\mu^{-r}}\ar[r] & X_*
}\end{equation}
This amounts to showing that $X'_{*+r}\tensor X_* \to X'_*\tensor X_*\too{\mu^{-r}}
X_*$ factors through $X_{*+r}$, where $\mu^{-r} = \psi_1(\bar{\mu}^{-r})$. Since
$\OO_\infty(x'_0,x_0;x_0)\to \OO(x'_0,x_0;x_0) = \Kmod_\infty(x',x;x)$ is a trivial fibration and the
point is cofibrant, we can lift $\bar{\mu}^{-r}$ to $\til{\mu}^{-r}\in
\OO_\infty(x',x_0;x_0)$ with $\psi(\til{\mu}^{-r}) = \mu^{-r}$.
One can find $a\in \Oiy(x_r;x_0)$ and $a'\in \Oiy(x'_r;x'_0)$ such that $\psi(a)$ and $\psi(a')$
are the colocalization maps $X_{*+r}\to X_*$ and $X'_{*+r}\to X'_*$, since $\OO_\infty\to \OO$ is a levelwise trivial fibration and this is true for $\OO$.
$$ 
\xymatrix{
(\til{\mu}^{-r},a')\ar@{}[r]|-\in & \Oiy(x'_0,x_0;x_0)\tensor
\Oiy(x'_r;x'_0)\ar[r]^-\psi\ar[d] &
 \mathcal{P}(x'_0,x_0;x_0)\tensor \mathcal{P}(x'_r;x'_0)\ar[d]
\\ & \Oiy(x'_r,x_0;x_0)\ar[r]^-\psi & \mathcal{P}(x'_r,x_0;x_0) \ni f
} $$
$$ \xymatrix{
 & \{ a \}\tensor \Oiy(x'_r,x_0; x_r)\ar[r]^-\psi\ar[d]^\gamma &
\mathcal{P}(x_r;x_0)\tensor \mathcal{P}(x'_r,x_0; x_r)\ar[d]
\\\ast\ar[r]\ar@{.>}[ru] &  \Oiy(x'_r,x_0;x_0)\ar[r]^-\psi & \mathcal{P}(x'_r,x_0;x_0)\ni f
} $$
Again using the fact that $\OO_\infty\to \OO$ is a levelwise trivial fibration, it is easy to check that $\gamma$ is a trivial fibration, and hence the lift exists.
Define $f\in \mathcal{P}(x'_r,x_0;x_0)$ as the image of $(\til{\mu}^{-r},a')$
under the first diagram; this represents the bottom left composition in
\eqref{eq:middle-square}.
This determines the bottom row of the second diagram, and if
$\til{\mu}_0\in \Oiy(x'_r,x_0;x_r)$ is the image of the lift in the second
diagram, then we take $\mu_0 = \psi(\til{\mu}_0)$. The other maps 
$\mu_0:X''_*\tensor X'_{*+r}\to X'_{*+r}$
and $\mu_0:X''_*\tensor X_{*+r}\to X_{*+r}$ can be defined similarly, and the associativity follows from the $\OO_\infty$-algebra structure.
\end{proof}

\begin{lemma}\label{lem:mu-unique}
Given $\mu:X'\tensor X\to X$ and
a pair of maps $\mu_0,\mu_1: X'_*\tensor X_*\to X_*$ making the following
diagram in $\cC^\Z$ commute for $i=0,1$,
\begin{equation}\label{eq:mu_t-htpy} \xymatrix{
X'_*\tensor X_*\ar[r]^-{\mu_i}\ar[d] & X_*\ar[d]
\\X'\tensor X\ar[r]^-\mu & X
}\end{equation}
there is a homotopy $\mu_*: I\to \Hom(X'_*\tensor X_*,X_*)$
between them such that \eqref{eq:mu_t-htpy} commutes for $i\in I$.
\end{lemma}
\begin{proof}
Let $\mathcal{P}$ be as in Lemma
\ref{lem:GRSO-4.1} with $\mathbf{X} = (X'',X',X)$, the operad $\Kmod$, and the
colocalization $s(t)=0$ for all colors $t$. The lemma says that $\mathcal{P}(x';x)\to
\Hom_{\Kmod}(\mathbf{X},\mathbf{X})(x',x;x) = \Hom(X'\tensor X,X)$ is a trivial fibration,
and since all simplicial sets are cofibrant, we have a lifting diagram as
follows.
$$ \xymatrix{
 & \mathcal{P}(x',x;x)\ar[d]
\\\ast\ar[r]_-\mu\ar@{.>}[ru] & \Hom(X'\tensor X,X)
}$$
Then $\mu_0$ and $\mu_1$ represent two lifts, and 
it is a standard model category fact that any two lifts are homotopic via a homotopy
over $\Hom(X'\tensor X,X)$.
\end{proof}


\begin{proof} [Proof of Proposition \ref{prop:GRSO-assumptions}]
Parts \eqref{item:H} and \eqref{item:mu_r} are proved as Lemma
\ref{item:GRSO-2}. Part \eqref{item:H_1} follows from this using Lemma
\ref{lem:mu-unique} and reindexing $\mu^r$ as a map $X'_*\tensor X_*\to X_*$ and
similarly for $\mu_r$.

We now turn to Part \eqref{item:mu_**}.
Following the definition of the full slice $s_*$ in \cite[\S5]{GRSO}, consider
$(l'_1c'_0 d(X''), l'_1c'_0d(X'), l'_1 c'_0d(X))$. A straightforward
application of \cite[Theorem 4.2]{GRSO} (similar to the proof of \cite[Theorem
5.16]{GRSO}) shows that this object is an
$\Kmod$-algebra, with strict comparison to $(d(X''),d(X'), d(X))$.
\end{proof}

The equivariant regular slice spectral sequence is an example of a tower of
localizations; Moss' theorem has been proved in this context by Ullmann
\cite{ullman}.
The main example in \cite{GRSO} is the motivic slice spectral sequence, which we
discuss next; Moss' theorem in this context is new.

\subsection{The $\R$-motivic effective slice spectral sequence} \label{sec:R-motivic}
Our motivation for writing this paper is to study Toda brackets in the
$\R$-motivic effective slice spectral sequence for the sphere at $p=2$. This is the
spectral sequence associated to Voevodsky's effective slice filtration: given an
effective motivic spectrum $X$, there is a tower
$$ 
\adjustbox{scale=1}{\xymatrix{
\hspace{-20pt}X = f_0 X\ar[d] & f_1X\ar[l]\ar[d] & f_2X\ar[l] & \cdots
\\s_0X & s_1X & \cdots
}}$$
which gives rise to the effective slice spectral sequence.
By \cite{Pelaez}, if $X$ is a ring spectrum, we get a multiplicative
pairing of spectral sequences $E_r(X)\tensor E_r(X)\to E_r(X)$.
The quotients $s_qX$ are called \emph{slices}. See
\cite{levine-convergence,RSO19} for more details.

\begin{lemma}
Let $X'',X'$ be motivic (strict) ring spectra with a ring map $X''\to X'$, and let $X$ be
an $X'$-module.
There are multiplications on the triples $(X'',X',X)$, $(f_*(X''), f_*(X'),
f_*(X))$, and
$(s_*(X''), s_*(X'), s_*(X))$ satisfying Assumption \ref{ass:X}.
\end{lemma}
For example, we are particularly interested in the case where $X''=X'=X$ are the
sphere spectrum $S$, which is strictly commutative.
\begin{proof}
By \cite[\S6]{GRSO}, the setup of \cite[\S2.1]{GRSO} applies to the effective slice filtration on the category $\M:=\Sp^\Sigma_T(k)$ of symmetric motivic spectra,
and so Assumption \ref{ass:X}\eqref{item:H}--\eqref{item:mu_**} follow from
Proposition \ref{prop:GRSO-assumptions}.
For Assumption \ref{ass:X}\eqref{item:E_r-pairings}, Pelaez \cite[Theorem 3.6.16]{Pelaez} shows that there is a natural multiplication on the $E_r$-page, defined using the usual definition of $E_r$ as a subquotient of $E_1$. 
\end{proof}

R{\"o}ndigs, Spitzweck, and {\O}stv{\ae}r \cite{RSO19} calculated the slices of
the motivic sphere spectrum:
\begin{equation}\label{eq:RSO} s_*(S) = H\Z\tensor \Ext_{MU_*MU}^{*,*}(MU_*, MU_*)
\end{equation}
where the notation means that a copy of $\Z/2^n$ in the $\Ext$ term corresponds
to an (appropriately degree-shifted) $H\Z/2^n$ summand, where $H\Z/2^n$ is the motivic Eilenberg--Maclane
spectrum. In the identification \eqref{eq:RSO},
(higher) multiplicative data about the Adams--Novikov $E_2$-term
$\Ext^{*,*}_{MU_*MU}(MU_*, MU_*)$ carries over to the slice $E_1$-page
$\pi_{*,*}(s_*(S))$. In practice, we work one prime at a time, and focus on the
$p=2$ case.
Combining results of \cite{RSO19} and
Hu, Kriz, and Ormsby \cite{HKO11}, the 2-completed slice spectral sequence for
the $\R$-motivic sphere converges to the homotopy groups of the 2-completed
sphere, which we denote by $\pi_{*,*}^\R(S)$.

We illustrate how one uses the Toda bracket convergence theorem to compute
relations and permanent cycles in the $\R$-motivic slice spectral sequence.

\begin{eg}\label{ex:nu}
There is a strictly defined Adams--Novikov Massey product
$\an{\alpha_1,2,\alpha_1} = 2\alpha_{2/2}$. The class $\alpha_1$ is a permanent
cycle converging to $\eta\in \pi_{1,1}^\R(S)$ and $\alpha_{2/2}$ is a permanent
cycle converging to $\nu\in \pi_{3,2}^\R(S)$.
We have $2\cdot \alpha_1=0$ in the
slice $E_1$-page but $2\cdot \eta$ is nonzero in
$\pi_{*,*}^\R(S)$. However,
there is a class $\omega= 2+\rho\eta$ detected by 2 in
the slice spectral sequence (as $\rho\eta$ has higher slice filtration) such that $\omega\eta=0 = \omega\rho$.
Hence there is a Toda bracket
$\an{\alpha_1,2,\alpha_1}$ in the slice $E_1$-page 
containing $2\alpha_{2/2}$ and there is a corresponding Toda bracket
$\an{\eta,\omega,\eta}$ in homotopy that is also strictly defined.
Then Theorem \ref{thm:E1} implies that
$2\alpha_{2/2}$ is a permanent cycle in the $\R$-motivic slice spectral
sequence converging to $\omega\nu$ which is contained in the homotopy Toda
bracket $\an{\eta,\omega,\eta}$.


While this example is in low enough stems that the Toda bracket argument is not
needed, this argument applies to a wealth of other Adams--Novikov Massey
products.
\end{eg}

\begin{eg}\label{ex:ta1}
In $\pi_{*,*}^\R (H\Z/2^n)$, there is a strictly defined Toda bracket
$\an{\rho,2,2^{n-1}} = 2^{n-1}\tau$, which can be seen using the
spectral sequence associated to the cofiber sequence of motivic
Eilenberg-Maclane spectra
$$ H\Z/2\to H\Z/2^n\to H\Z/2^{n-1}. $$
This is another source of slice $E_1$-page Toda brackets: if $x\in
\Ext_{MU_*MU}^{*,*}(MU_*, MU_*)$ generates a copy of $\Z/2^n$, then
one may consider the $E_1$-page Toda bracket $\an{\rho, 2, 2^{n-1}x} \ni 
2^{n-1}\tau x$. For example, there is a strictly defined $E_1$-page Toda bracket
$\an{\rho,2,\alpha_1} = \tau \alpha_1$. Using the notation in Example \ref{ex:nu}, Theorem \ref{thm:E1} implies that the $\pi_{*,*}^\R(S)$ Toda bracket
$\an{\rho,\omega,\eta}$ contains $\tau \eta$.
\end{eg}

\begin{eg}
There are also higher slice Massey products which converge to Toda brackets. For
example, the slice differential $d_1(\tau^2) = \rho^2\tau \alpha_1$ implies that
$\an{2, \rho^2,\tau \alpha_1}$ is strictly defined as a slice $E_2$-page Massey product
consisting of $2\tau^2$. The products $\omega\cdot \rho^2$ and $\rho^2\cdot \tau
\eta$ are zero in homotopy (see Example \ref{ex:nu}), and so the Toda bracket
$\an{\omega, \rho^2, \tau \eta}$ in $\pi_{*,*}^\R(S)$ is defined and contains
$\omega\tau^2$. Then we may
deduce the hidden multiplication $\eta\cdot \omega\tau^2$ using a shuffle
argument as follows. We have
$$ \eta \cdot \an{\omega,\rho^2,\tau \eta} = \an{\eta,\omega,\rho^2} \tau
\eta. $$
By Example \ref{ex:ta1}, the right hand side contains $\rho(\tau \eta)^2$.
Moreover, the right hand side is strictly defined: $\an{\eta,\omega,\rho^2}$ contains
indeterminacy generated by $\rho^3\nu$, but this in annihilated by
$\tau \eta$, since $\rho^2 \tau \eta = 0$. Thus we have
$$ \eta \cdot \omega\tau^2 = \rho (\tau \eta)^2. $$
\end{eg}

\printbibliography

\end{document}